\newtheorem{theorem}{Theorem}[section]
\newtheorem{lemma}[theorem]{Lemma}
\newtheorem{pr}[theorem]{Proposition}
\theoremstyle{definition}
\newcommand{\bel}{\begin{equation} \label}
\newcommand{\ee}{\end{equation}}
\newcommand{\pd}{\partial}
\newcommand{\pf}{\partial_{\varphi}}
\newcommand{\C}{{\mathbb C}}
\newcommand{\R}{{\mathbb R}}
\newcommand{\rg}{{\rangle_g}}
\newcommand{\nag}{{\nabla_g}}
\newcommand{\lag}{{\Delta_g}}
\newcommand{\divg}{{\text{div}}_g}
\newcommand{\dvg}{{\text{dv}}_g}
\newcommand{\dwg}{{\text d} \omega_g}
\newcommand{\png}{{\partial_{\nu}}}
\newcommand{\dd}{{\rm d}}
\newcommand{\MM}{{\mathcal M}}
\newcommand{\mz}{{\mathfrak{z}}}
\newcommand{\eps}{{\varepsilon}}
\def\beq{\begin{equation}}
\def\eeq{\end{equation}}
\newcommand{\bea}{\begin{eqnarray}}
\newcommand{\eea}{\end{eqnarray}}
\newcommand{\beas}{\begin{eqnarray*}}
\newcommand{\eeas}{\end{eqnarray*}}
\newcommand{\Pre}[1]{\ensuremath{\mathrm{Re} \left( #1 \right)}}
\newcommand{\Pim}[1]{\ensuremath{\mathrm{Im} \left( #1 \right)}}
{


\begin{document}
\begin{center}
{\large \bf Identification of the twisting function for the dynamic Schr\"odinger operator in a quantum waveguide}

\medskip

\end{center}

\medskip

\begin{center}
{\sc Mourad Bellassoued, Michel Cristofol, Eric Soccorsi}\\
\end{center}

\begin{abstract}
\end{abstract}



\section{Introduction}

\subsection{Statement and origin of the problem}

\paragraph{The problem.}
\label{sec-settings}
Let $\omega$ be a bounded open subset of $\R^2$ with ${\rm C}^2$-boundary. Furthermore we assume that $\omega$ contains the origin of $\R^2$. Set $\Omega := \omega \times \R$. For $x:=(x_1,x_2,x_3) \in \Omega$ we write $x:=(x_{\tau},x_3)$ with $x_{\tau}:=(x_1,x_2)$.  Put 
$$\nabla_{\tau}:=( \partial_{1}, \partial_{2})^T,\ \Delta_{\tau} := \partial_{1}^2 + \partial_{2}^2,\ 
\pf := x_1 \partial_2 - x_2 \partial_1.
$$
We assume that $\theta = \theta(x_3) \in C^1(\R)$ and denote by $\dot{\theta}$ the derivative of $\theta$ wrt to $x_3$. Define
$H_{\dot{\theta}}$ as the self-adjoint operator in ${\rm L}^2(\Omega)$
\bel{a0}
H_{\dot{\theta}} := -\Delta_{\tau} -( \dot{\theta}(x_3) \pf + \partial_3)^2.
\ee
with Dirichlet boundary conditions on $\Gamma :=\partial \Omega$.

Given $T>0$, $q_0 \in $ and $h \in $, we then consider the Schr\"odinger equation
\bel{a1}
\left\{  \begin{array}{ll} -\imath q'(t,x) +  H_{\dot{\theta}} q(t,x) = 0, & (t,x) \in (0,T) \times \Omega  \\ q(t,x) = h(t,x), & (t,x) \in (0,T) \times \Gamma  \\ q(0,x) = q_0(x) & x \in \Omega, \end{array} \right.
\ee
where the $q'$ stands for $\frac{\partial q}{\partial t}$.

\paragraph{Twisted waveguide.}
Define the twisted domain 
$$ \Omega_{\theta} := \{ r_{\theta}(x_3) (x),\ x \in \Omega \}, $$
where
$$ r_{\theta}=r_{\theta}(x_3) := \left( \begin{array}{ccc}
\cos \theta(x_3) & \sin \theta(x_3) & 0 \\
-\sin \theta(x_3) & \cos \theta(x_3) & 0 \\
0 & 0& 1 \end{array} \right), $$
and introduce the transform
\[ (\mathcal{U}_{\theta} \psi)(x) := \psi( r_{\theta} (x) ),\ x \in \Omega,\ \psi \in {\rm L}^2(\Omega_{\theta}), \]
which is unitary from ${\rm L}^2(\Omega_{\theta})$ onto ${\rm L}^2(\Omega)$. Then we have $H_{\dot{\theta}} = \mathcal{U}_{\theta}  ( - \Delta ) \mathcal{U}_{\theta}^*$ where $\Delta$ denotes the Laplacian operator in ${\rm L}^2(\Omega_{\theta})$.
Moreover if $q$ is the solution to \eqref{a1} then $u := \mathcal{U}_{\theta}^* q$ obeys the system
$$
\left\{  \begin{array}{ll} -\imath u'(t,x) - \Delta u(t,x) = 0, & (t,x)  \in (0,T) \times \Omega_{\theta}  \\ u(t,x) = (\mathcal{U}_{\theta}^* h(t,.))(x), & (t,x) \in  (0,T) \times \partial \Omega_{\theta}  \\ u(0,x) = (\mathcal{U}_{\theta}^* q_0)(x), & x \in \Omega_{\theta}. \end{array} \right.
$$
Hence \eqref{a1} describes, modulo a unitary transform, the quantum motion of a \footnote{Here the various physical constants are taken equal to $1$.}{charged particle} in the twisted waveguide $\Omega_{\theta}$. 

In this paper we examine the inverse problem of determining the second derivative $\ddot{\theta}$ of the twisting function $\theta$ from some suitable localized, either interior or boundary, measurement of the solution $q$ to \eqref{a1}.

\subsection{Main results}
Set $I_d :=(-d,d)$ and $\Omega(d):= \omega \times I_d$ for all $d>0$.
For $\ell>0$ and $\eps>0$ we define the set of admissible twistings as
$$ 
\Theta_{\ell,\eps} = \{ \theta \in {\rm C}_0^2(\R),\ \dot{\theta} \in {\rm C}_0(I_{\ell})\ {\rm and}\ \| \dot{\theta} \|_{{\rm C}^{1}(I_{\ell})} \leq \eps \}.
$$

The first result establishes Lipschitz stability in the inverse problem of identifying $\ddot{\theta}$ from the measurement of $q$ and its time derivative $q'$ in $\Omega_{0,L}:=\omega_0 \times I_L$ for some $L>\ell$. This claim is valid for any arbitrarily small $\omega_0$ with positive $\R^2$-Lebesgue measure such that
\bel{c0d}
(0,0) \notin \omega_0
\ee
and
\bel{c0e}
\partial \omega_0 \cap \partial \omega = \emptyset.
\ee
\begin{theorem}
\label{thm1}
Let $T>0$, $\ell>0$, $\omega$ and $\Omega$ be the same as in \S \ref{sec-settings}. Let $\omega_0$ fulfill \eqref{c0d}-\eqref{c0e} and pick $\tilde{q}_0 \in {\rm H}^2(\Omega) \cap {\rm H}_0^1(\Omega)$ in such a way that
\bel{a2}
\exists \mathfrak{q}>0,\ | \pf \tilde{q}_0(x) | \geq \mathfrak{q},\ {\rm a.e.}\ x \in \Omega_0:=\omega_0 \times \R.
\ee
Let $q_0 \in {\rm H}^2(\Omega) \cap {\rm H}_0^1(\Omega)$, let $\theta$ (resp. $\tilde{\theta}$) belong to $\Theta_{\eps}$ and note $q$ (resp. $\tilde{q}$) the solution to \eqref{a1} (resp. to \eqref{a1} where $\tilde{\theta}$ and $\tilde{q}_0$ are substituted for $\theta$ and $q_0$).
Then, for every $L> \ell$ there exists a constant $C>0$, depending only on $L,T,\ell,\eps$ and $\omega_0$, such that
$$\|\ddot{\theta}-\ddot{\tilde{\theta}}\|^2_{{\rm L}^2(I_{\ell})} \leq C \left(   \| q'-\tilde{q}' \|_{{\rm L}^2(0,T;{\rm H}^1(\Omega_0(L)))}^2  + \| q_0-\tilde{q}_0 \|_{{\rm H}^2(\Omega_0)}^2 \right).$$
\end{theorem}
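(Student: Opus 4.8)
The plan is to recast the identification of $\ddot\theta$ as an inverse source problem and then to run the Bukhgeim--Klibanov method on top of a global Carleman estimate for the dynamic Schr\"odinger operator. First I would set $u:=q-\tilde q$. Expanding the square in \eqref{a0} through $(\dot\theta\pf+\pd_3)^2=\dot\theta^2\pf^2+2\dot\theta\pf\pd_3+\ddot\theta\,\pf+\pd_3^2$ (which uses that $\pf$ commutes with functions of $x_3$) and subtracting the two copies of \eqref{a1}, one finds that $u$ solves
\begin{equation*}
-\imath u'+H_{\dot\theta}u=F,\qquad u_{|\Gamma}=0,\qquad u(0,\cdot)=q_0-\tilde q_0,
\end{equation*}
with source
\begin{equation*}
F=(\ddot\theta-\ddot{\tilde\theta})\,\pf\tilde q+2(\dot\theta-\dot{\tilde\theta})\,\pf\pd_3\tilde q+(\dot\theta^2-\dot{\tilde\theta}^2)\,\pf^2\tilde q.
\end{equation*}
The unknown $g:=\ddot\theta-\ddot{\tilde\theta}$, a function of $x_3$ supported in $I_\ell$, is the coefficient of the leading factor $\pf\tilde q$. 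Since $\dot\theta-\dot{\tilde\theta}$ is the antiderivative of $g$ and both twistings are $\eps$-small in $\Theta_{\ell,\eps}$, the last two terms of $F$ are linear in $g$ with small coefficients; they will be absorbed at the end.

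Next I would differentiate the system in time and work with $v:=u'$, which solves $-\imath v'+H_{\dot\theta}v=F'$ with $v_{|\Gamma}=0$; note that the observed quantity $\|q'-\tilde q'\|_{{\rm L}^2(0,T;{\rm H}^1(\Omega_0(L)))}$ is exactly an ${\rm H}^1$-norm of $v$ on the localized cylinder. The Bukhgeim--Klibanov mechanism rests on the initial value: evaluating the equation for $u$ at $t=0$ gives $v(0,\cdot)=u'(0,\cdot)=\imath\bigl(F(0,\cdot)-H_{\dot\theta}(q_0-\tilde q_0)\bigr)$, and $F(0,\cdot)=g\,\pf\tilde q_0+(\text{lower order})$. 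Here the non-degeneracy hypothesis \eqref{a2}, $|\pf\tilde q_0|\ge\mathfrak q$ a.e. on $\Omega_0$, makes the multiplication $g\mapsto g\,\pf\tilde q_0$ boundedly invertible on $\Omega_0$, so a weighted bound on $v(0,\cdot)$ controls $\|g\|_{{\rm L}^2(I_\ell)}$; the term $H_{\dot\theta}(q_0-\tilde q_0)$ is pure data and, being second order in space, is measured by $\|q_0-\tilde q_0\|_{{\rm H}^2(\Omega_0)}$. To place $t=0$ in the interior of the time interval I would extend $u$ to $(-T,T)$ using the time-reversibility of the Schr\"odinger flow.

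The analytic core is a global Carleman estimate for $-\imath\pd_t-\Delta$ with weight $e^{2s\Phi}$, $\Phi(t,x)=e^{\lambda\psi(x)}\eta(t)$, whose temporal factor $\eta$ is maximal with $\eta'(0)=0$ at $t=0$ (so the Bukhgeim--Klibanov integration by parts in $t$ isolates the $t=0$ contribution) and whose spatial weight $\psi$ must be engineered to meet the three geometric constraints of the statement simultaneously: the usual pseudoconvexity/absence of critical points outside the observation region dictated by $\omega_0$; the condition $(0,0)\notin\omega_0$ in \eqref{c0d}, which is forced because the rotation field $\pf=x_1\pd_2-x_2\pd_1$ degenerates at the origin, so $|\pf\tilde q_0|$ can be bounded below only away from it; and $\pd\omega_0\cap\pd\omega=\emptyset$ in \eqref{c0e}, which keeps the uncontrolled lateral boundary integrals away from the active region of $\psi$. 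The gap $L>\ell$ together with $\dot\theta\in{\rm C}_0(I_\ell)$ allows a cutoff in $x_3$ that localizes the estimate on the unbounded cylinder without producing boundary terms at $x_3=\pm\ell$. Because $\dot\theta=O(\eps)$, the operator $H_{\dot\theta}$ differs from $-\Delta$ only by terms that are lower order or carry a factor $\eps$, so the Carleman estimate for $-\imath\pd_t-\Delta$ survives for $H_{\dot\theta}$ after shrinking $\eps$.

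Finally I would apply this estimate to $v$, replace $|{-\imath}v'+H_{\dot\theta}v|^2$ by $|F'|^2$ on the right, and carry out the Bukhgeim--Klibanov integration by parts in $t$; this produces on the left a weighted integral of $|g|^2|\pf\tilde q_0|^2e^{2s\Phi(0,\cdot)}$ over $\Omega$, which by \eqref{a2} is bounded below by $\mathfrak q^2$ times a weighted $\|g\|^2$, while the right-hand side is furnished by the ${\rm H}^1(\Omega_0(L))$-observation of $v$ and the ${\rm H}^2(\Omega_0)$-norm of $q_0-\tilde q_0$. Fixing $s$ large and $\eps$ small absorbs the lower-order source terms (which are linear in $g$ through $\dot\theta-\dot{\tilde\theta}=\int g$) into the leading term and yields the asserted inequality. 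The principal obstacle is precisely the construction of a single spatial weight $\psi$ compatible at once with \eqref{c0d}, \eqref{c0e} and the non-degeneracy \eqref{a2} on an \emph{unbounded} waveguide, together with the delicate bookkeeping of the temporal and lateral boundary terms generated by the Carleman integration by parts --- in particular arranging that only the ${\rm H}^1$-trace of $v$ on $\Omega_0(L)$, and not a higher-order trace, appears on the right-hand side.
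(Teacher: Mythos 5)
Your outline reproduces the paper's general strategy (time symmetrization to $(-T,T)$, linearization $y:=q-\tilde{q}$, differentiation in time, localization by a cutoff, then a Carleman estimate combined with the Bukhgeim--Klibanov evaluation at $t=0$ using \eqref{a2}), but it has a genuine gap exactly at the nonstandard crux of the argument: the treatment of the antiderivative term. Writing $\alpha:=\dot{\theta}-\dot{\tilde{\theta}}$, your source contains $2\alpha\,\pf\pd_3\tilde{q}$, whose coefficient $2\pf\pd_3\tilde{q}$ is \emph{not} small; so your claim that ``the last two terms of $F$ are linear in $g$ with small coefficients'' is false for this term, and ``fixing $s$ large and $\eps$ small'' does not by itself absorb it. After the Carleman step, the weighted norm of $\alpha$ sits on the right-hand side with the \emph{same} Carleman weight as the left-hand side term in $\dot{\alpha}=g$, and for a generic pseudoconvex weight there is no favorable comparison between $\| {\rm e}^{-s\eta_0}\alpha\|$ and $\| {\rm e}^{-s\eta_0}\dot{\alpha}\|$: an unweighted Poincar\'e inequality only gives a constant of order $\ell$, which cannot beat the Carleman constant. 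The paper resolves this with a dedicated lemma (Lemma \ref{lm2}): since $\alpha$ is supported in $I_\ell$, an integration by parts in $x_3$ yields $\| {\rm e}^{-s \eta_0} \chi \alpha \| \leq \frac{T}{\sqrt{d_3 s}} \| {\rm e}^{-s \eta_0} \chi \dot{\alpha} \|$, \emph{provided} the weight is strictly increasing in $x_3$ on the support, i.e. $\pd_{x_3}\eta_0 \geq 2d_3\slash T^2>0$ as in \eqref{c22}. This monotonicity is precisely why the spatial weight is taken to be $\vartheta(x)=|x-a|^2$ with $a_3-L=d_3>0$ (see \eqref{c20}--\eqref{c21}); it is an extra geometric requirement on the weight, independent of pseudoconvexity. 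Your weight ${\rm e}^{\lambda\psi(x)}\eta(t)$, engineered only for pseudoconvexity and for the location of $\omega_0$, has no reason to satisfy it, so your final absorption step fails as described. This gain of $s^{-1\slash2}$ (together with the $s^{-3\slash2}$ gain of the Bukhgeim--Klibanov time integration, Lemma \ref{lm1}) is what closes the estimate; it is the one genuinely problem-specific idea, and it is missing from your plan.

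A secondary flaw: you cannot pass from a Carleman estimate for $-\imath\pd_t-\Delta$ to one for $H_{\dot{\theta}}$ by treating the difference as an absorbable perturbation. That difference contains the \emph{second-order} terms $\dot{\theta}^2\pf^2+2\dot{\theta}\pf\pd_3$; the left-hand side of the Carleman estimate controls only first derivatives (with weight $s$), so $O(\eps)$ second-order terms placed on the right-hand side can never be reabsorbed, however small $\eps$ is. The paper instead proves the Carleman estimate directly for the Laplace--Beltrami operator of a general metric (Proposition \ref{pr-C}) and uses the smallness of $\|\dot{\theta}\|_{{\rm C}^1(I_\ell)}$ only to verify the pseudoconvexity hypothesis \eqref{caH1} for $\vartheta=|x-a|^2$ with respect to the twisted metric \eqref{c0c}. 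Your conclusion (``the estimate survives for $H_{\dot{\theta}}$'') is correct, but the justification must go through stability of the pseudoconvexity condition, not absorption of the perturbation. Apart from these two points, your skeleton --- the initial-value identity for $v(0,\cdot)$, the lower bound via \eqref{a2}, and the appearance of only an ${\rm H}^1$ interior observation through the commutator with the cutoff --- is consistent with the paper's proof.
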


In contrast with Theorem \ref{thm1} only boundary, instead of volume internal measurement of the solution are required for the following result to hold.

\begin{theorem}
\label{thm2}
Let $T>0$, $L>0$, $\omega$ and $\Omega$ be the same as in \S \ref{sec-settings}. Pick $\tilde{q}_0 \in {\rm H}^2(\Omega)$ obeying \eqref{a2} for some open subset $\omega_0 \subsetneq \omega$ fulfilling $(C_2)$.
Let $\theta$ (resp. $\tilde{\theta}$) belong to $\Theta_{\eps}$ and note $q$ (resp. $\tilde{q}$) the solution to \eqref{a1} (resp. to \eqref{a1} where $\tilde{\theta}$ and $\tilde{q}_0$ are substituted for $\theta$ and $q_0$).
Then there is a constant $C'>0$, depending only on $T,L,\omega_0$ and $\ell$, such that 
$$\|\ddot{\theta}-\ddot{\tilde{\theta}}\|^2_{{\rm L}^2(I_L)} \leq C' \left(  \| \pd_{\nu}^{(g_0)} (q'- \tilde{q}') \|_{{\rm L}^2(0,T;{\rm L}^2(\Gamma_0))}^2  + \| q_0-\tilde{q}_0 \|_{{\rm H}^2(\Omega)}^2 \right).$$
\end{theorem}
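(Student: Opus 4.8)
The plan is to combine the Bukhgeim--Klibanov method with a global boundary Carleman estimate for the Schr\"odinger operator $P:=-\imath\partial_t + H_{\dot\theta}$. Writing $u:=q-\tilde q$ and $p:=\ddot\theta-\ddot{\tilde\theta}$, and expanding
\begin{equation*}
H_{\dot\theta}-H_{\dot{\tilde\theta}} = -(\dot\theta^2-\dot{\tilde\theta}^2)\pf^2 - 2(\dot\theta-\dot{\tilde\theta})\pf\partial_3 - p\,\pf ,
\end{equation*}
I would first check that $u$ solves $Pu=f$ with homogeneous Dirichlet data on $\Gamma$ and $u(\cdot,0)=q_0-\tilde q_0$, where the source
\begin{equation*}
f = (\dot\theta^2-\dot{\tilde\theta}^2)\pf^2\tilde q + 2(\dot\theta-\dot{\tilde\theta})\pf\partial_3\tilde q + p\,\pf\tilde q
\end{equation*}
carries the unknown through its principal contribution $p\,\pf\tilde q$. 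The point of the non-degeneracy hypothesis \eqref{a2}, $|\pf\tilde q_0|\geq\mathfrak q$ on $\Omega_0=\omega_0\times\R$, is exactly that it makes $p(x_3)$ detectable in $f(\cdot,0)$ after integration over the cross section $\omega_0$.

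Next, following Bukhgeim--Klibanov I would differentiate the system in time, set $y:=u'$ --- which again solves a Schr\"odinger equation $Py=f'$ with homogeneous lateral data, with the regularity furnished by the well-posedness theory for \eqref{a1} --- and use the time-reversibility of \eqref{a1} to extend $y$ to the symmetric slab $(-T,T)\times\Omega$. I would then apply a Carleman estimate whose weight $e^{2s\varphi}$ degenerates as $t\to\pm T$ and has a nondegenerate maximum in time at $t=0$. Integrating in $t$ and using that the weight kills the contributions at $t=\pm T$ brings out the initial term $\int_\Omega|y(\cdot,0)|^2 e^{2s\varphi(\cdot,0)}$; since the equation gives
\begin{equation*}
y(\cdot,0)=\imath\,p\,\pf\tilde q_0 - \imath\,H_{\dot\theta}(q_0-\tilde q_0) + \imath\,\big((\dot\theta^2-\dot{\tilde\theta}^2)\pf^2\tilde q_0 + 2(\dot\theta-\dot{\tilde\theta})\pf\partial_3\tilde q_0\big),
\end{equation*}
the bound \eqref{a2} yields, after integration over $\omega_0\times I_L$, a lower bound $\gtrsim\mathfrak q^2\|p\|_{L^2(I_L)}^2$ modulo $\|q_0-\tilde q_0\|_{H^2(\Omega)}^2$ and the middle-order terms.

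For the boundary version I would invoke the \emph{boundary} Carleman estimate for $P$, with the conormal derivative $\pd_{\nu}^{(g_0)}$ associated with the metric $g_0$, whose right-hand side contains the observation term $\int_{-T}^{T}\!\!\int_{\Gamma_0}|\pd_{\nu}^{(g_0)}y|^2 e^{2s\varphi}\,\dd\sigma\,\dd t$ in place of the interior integral used for Theorem \ref{thm1}; this substitution is licit precisely when $\Gamma_0$ sits in the part of the boundary illuminated by $\varphi$, which is where the geometric condition $(C_2)$ on $\omega_0$ enters. The source is then absorbed as follows: because $t=0$ is a nondegenerate maximum of $\varphi$ in time one has $\varphi(x,t)-\varphi(x,0)\lesssim -t^2$, whence $\int_{-T}^{T} e^{2s(\varphi(x,t)-\varphi(x,0))}\,\dd t\lesssim s^{-1/2}$, so that $\int\!\!\int|f'|^2 e^{2s\varphi}$ is $O(s^{-1/2})$ relative to the initial term and can be absorbed for $s$ large. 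The contributions of $\dot\theta-\dot{\tilde\theta}$ are further controlled by $\|p\|_{L^2(I_L)}$ itself, since $\dot\theta-\dot{\tilde\theta}$ is compactly supported and $\dot\theta(x_3)-\dot{\tilde\theta}(x_3)=\int_{-\infty}^{x_3}p(s)\,\dd s$ gives $\|\dot\theta-\dot{\tilde\theta}\|_{L^2}\leq C_\ell\|p\|_{L^2}$, while the factor $\dot\theta+\dot{\tilde\theta}$ in the quadratic term is bounded by $2\eps$ thanks to $\theta,\tilde\theta\in\Theta_{\ell,\eps}$.

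Finally I would close with the standard energy estimate for the Schr\"odinger equation to dominate the space-time weighted norm of $y$ arising en route and to transfer the symmetric-in-time data back to the original interval $(0,T)$, fix the Carleman parameters, and read off the stated inequality in terms of $\|\pd_{\nu}^{(g_0)}(q'-\tilde q')\|_{L^2(0,T;L^2(\Gamma_0))}^2$ and $\|q_0-\tilde q_0\|_{H^2(\Omega)}^2$. I expect the main obstacle to be the boundary Carleman estimate itself: establishing it for the twisted, magnetic-type operator $H_{\dot\theta}$ --- with its variable coefficient $\dot\theta(x_3)$ and the first-order cross term $\pf\partial_3$ --- and, above all, verifying the pseudoconvexity/sign condition on $\pd_{\nu}^{(g_0)}\varphi$ along $\Gamma_0$ under $(C_2)$, so that only boundary, rather than interior, data are required.
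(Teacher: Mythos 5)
Your overall framework (Bukhgeim--Klibanov: linearize, differentiate in time, symmetrize around $t=0$, apply a boundary Carleman estimate, extract the $t=0$ term and use \eqref{a2}) is indeed the paper's framework, and your source expansion agrees with \eqref{b3star}--\eqref{b4b}. But there is a genuine gap at the absorption step, and it is precisely the point where Theorem \ref{thm2} is harder than Theorem \ref{thm1}. In the boundary-data setting you cannot cut off in the cross-sectional variable $x_{\tau}$ (the commutator terms produced by a cut-off $\rho(x_{\tau})$ would require interior measurements of $z$, which you do not have); the paper accordingly localizes only in $x_3$ via $\mu$. Consequently the weighted source term $\| {\rm e}^{-s\eta}\, \dot{\alpha}\, \pf \tilde{q}' \|^2$ that must be absorbed lives on \emph{all} of $\Omega(\ell)=\omega\times I_\ell$, whereas the lower bound furnished by \eqref{a2} only controls the weighted norm of $\dot\alpha$ on $\Omega_0(\ell)=\omega_0\times I_\ell$. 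Your $O(s^{-1/2})$ gain from time integration absorbs the part over $\omega_0\times I_\ell$, but it says nothing about the part over $(\omega\setminus\omega_0)\times I_\ell$, where $\pf\tilde q_0$ may vanish: no power of $s^{-1}$ can absorb a term that simply does not appear on the left-hand side. The paper's resolution is Lemma \ref{lm3}: by choosing the pole $a$ of the weight $\vartheta=|x-a|^2$ with $a_\tau$ far from $\omega$ in the transverse direction (conditions \eqref{c23}--\eqref{c24}, which is what the hypothesis $(C_2)$ on $\omega_0$ encodes), one gets $m_1=\sup_{\Omega_1(\ell)}\eta_0 < m_0=\inf_{\Omega(\ell)\setminus\Omega_0(\ell)}\eta_0$, so that — since $\dot\alpha$ depends on $x_3$ only — the offending term is bounded by ${\rm e}^{-s(m_0-m_1)}$ times the good term and becomes exponentially negligible for large $s$ (estimate \eqref{b21}). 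You instead assign $(C_2)$ the role of an illumination condition on $\Gamma_0$; that is a misreading — the illumination condition is \eqref{caH3}, a separate requirement, while $(C_2)$ exists exactly to make the cross-sectional separation of the weight possible.

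A secondary, related flaw: you control the first-order unknown by the unweighted Poincar\'e bound $\|\dot\theta-\dot{\tilde\theta}\|_{{\rm L}^2}\leq C_\ell\|p\|_{{\rm L}^2}$. For Carleman absorption this is useless, because passing from the weighted norm $\|{\rm e}^{-s\eta_0}\alpha\|$ to the unweighted one and back costs a factor ${\rm e}^{2s(\max\eta_0-\min\eta_0)}$ which blows up in $s$. What is needed is the \emph{weighted} inequality of Lemma \ref{lm2} (estimate \eqref{b15}), $\| {\rm e}^{-s \eta_0} \alpha \| \leq (T^2\slash(d_3 s))\, \| {\rm e}^{-s \eta_0} \dot{\alpha} \|$, obtained by integrating by parts against ${\rm e}^{-2s\eta_0}$ and exploiting the monotonicity \eqref{c22} of $\eta_0$ in $x_3$ — which in turn is the reason for the requirement $a_3>L$ in \eqref{c20}. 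Both defects stem from the same source: the geometry of the weight (placement of $a$ relative to $\omega$ and to the axis of the guide) is not a technicality to be fixed at the end, but the mechanism that makes the boundary-data theorem work; your proposal never engages with it.
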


\subsection{Outline}
The derivation of Theorems \ref{thm1} and \ref{thm2} is based on a global Carleman estimate for the operator $H_{\dot{\theta}}$. A Carleman inequality for the dynamic Schr\"odinger operator defined in a compact manifold $\MM \subset \R^n$, $n \geq 2$, with smooth metric $g$, is established in Proposition \ref{pr-C} in \S \ref{sec-IC}. Its proof being quite lengthy it is postponed to Appendix A. The inverse problem of determining the twisting function (actually the second derivative of the twisting function) is addressed in \S \ref{sec-IP}, which contains the proofs of Theorems \ref{thm1}-\ref{thm2}.

\section{Carleman inequality for heterogeneous Schr\"odinger operators}
\label{sec-IC}
Let $n \geq 2$ and $\MM$ be a $n$-dimensional domain with smooth boundary $\partial \MM$ and
smooth metric $g$.
This section is devoted to establishing a Carleman estimate for the dynamic Schr\"odinger operator 
\bel{c0a}
B :=\imath \partial_t + \Delta_g, 
\ee
acting in ${\rm L}^2(\MM_T)$ where $\MM_T:=(-T,T) \times \MM$.
Here $\Delta_g$ denotes the Laplace-Beltrami operator associated to the smooth metric $g$.
In local coordinates, it is given by
\bel{c0b}
\Delta_g := \frac{1}{\sqrt{{\rm det}\ g}} \sum_{j,k=1}^n \partial_j \left( \sqrt{{\rm det}\ g}\ g^{jk}\ \partial_k \right),
\ee
where $g^{-1}:=(g^{jk})_{1 \leq j,k \leq n}$ is the inverse of $g:=(g_{jk})_{1 \leq j,k \leq n}$. 

We notice from \eqref{a0} that $H_{\dot{\theta}}$ coincides with $\Delta_g$ in the particular case where $n=3$ and
\bel{c0c}
g = \left( \begin{array}{ccc} 1 + \dot{\theta}^2 x_2^2 & - \dot{\theta}^2 x_1 x_2 & - \dot{\theta} x_2 \\
- \dot{\theta}^2 x_1 x_2 & 1 + \dot{\theta}^2 x_1^2 & \dot{\theta} x_1  \\
- \dot{\theta} x_2 & \dot{\theta} x_1 & 1 \end{array} \right).
\ee

\subsection{Notations}
For each $x \in \MM$ we define the inner product
\bel{c1}
g(X,Y) = \langle X , Y \rg := \sum_{j,k=1}^n g_{j,k}(x) \alpha_j \beta_k,\ X=(\alpha_j)_{1 \leq j \leq n}, Y=(\beta_j)_{1 \leq j \leq n} \in \C^n,
\ee
and the norm $|X|_g := \langle X , \overline{X} \rg^{1 \slash 2}$.

\subsection{Carleman estimate}
Assume that there exists $\vartheta \in {\rm C}^4(\MM;\R_+^*)$ such that
\bel{caH1}
\mathbb{D}^2 \vartheta(\xi,\xi)(x) + | \langle \nag \vartheta(x) , \xi \rg |^2 >0,\ x \in \MM,\
\xi \in \R^n \setminus \{ 0 \},
\ee
where $\nag \vartheta := g^{-1} \nabla \vartheta$ and $\mathbb{D}^2$ is the Hessian of $\vartheta$ wrt the metric $g$,
and suppose that $\vartheta$ has no critical points on $\MM$,
\bel{caH2}
\min_{x \in \overline{\Omega}} | \nabla_g \vartheta(x) |_g^2 > 0.
\ee
For further reference let us notice that:
\begin{enumerate}[(i)]
\item First, \eqref{caH1} may be rephrased into the following equivalent statement
\bel{caH1b}
\mathbb{D}^2 \vartheta(\xi,\overline{\xi})(x) + | \langle \nag \vartheta(x) , \xi \rg |^2 >0,\ x \in \MM,\
\xi \in \C^n \setminus \{ 0 \},
\ee
since $\mathbb{D}^2 \vartheta(\xi,\overline{\xi})=\mathbb{D}^2 \vartheta(\xi_R,\xi_R)+\mathbb{D}^2 \vartheta(\xi_I,\xi_I)$ where $\xi_R:=\Pre{\xi}$ and $\xi_I:=\Pim{\xi}$;
\item Second, \eqref{caH2} yields the existence of some constant $\beta>0$ satisfying
\bel{caH2b}
\min_{x \in \MM} | \nabla_g \vartheta(x) |_g^2 \geq \beta.
\ee 
\end{enumerate}
Moreover we impose on $\vartheta$ the following condition 
\bel{c3}
\vartheta(x) \geq \frac{2}{3} \sup_{x \in \MM} \vartheta(x),\ x \in \MM,
\ee
which can be fulfilled by substituting $\vartheta +C$ for $\vartheta$, for any sufficiently large real number $C$.

Further define
\bel{c4}
\psi(t,x) := \frac{{\rm e}^{\gamma \vartheta(x)}}{(T-t)(T+t)},\ \eta(t,x):=\frac{{\rm e}^{2 \gamma \| \vartheta \|_{\infty}}-{\rm e}^{\gamma \vartheta(x)}}{(T-t)(T+t)},\ (x,t) \in \MM_T, 
\ee
and introduce $M := {\rm e}^{-s\eta} B  {\rm e}^{s\eta}=M_1 + M_2$, where
\bea
M_1 & := & \imath \pd_{t} +  \Delta_g + s^2 | \nabla_g \eta |_g^2 \label{c5} \\
M_2 & := & \imath s \pd_{t} \eta  + 2s \nabla \eta \cdot \nabla_g + s (\Delta_g \eta). \label{c6}
\eea

Next we consider a subboundary $\Gamma_0 \subset \partial \MM$ satisfying
\bel{caH3}
\{ x \in \Gamma, \langle \nabla_g \vartheta(x) , \nu(x) \rangle_g  \geq 0 \} \subset \Gamma_0, 
\ee
where $\nu$ denotes the outward unit normal to $\pd \MM$ wrt the metric $g$. Finally we define $\png w := \nabla_g w \cdot \nu$ on $\Sigma_0:=(-T,T) \times \Gamma_0$.

\begin{pr}
\label{pr-C}
Assume  \eqref{caH1}-\eqref{caH2} and \eqref{caH3}. Let $\psi$, $\eta$ be given by \eqref{c4}, and $M_j$, $j=1,2$, be defined by \eqref{c5}-\eqref{c6}. 
Then there exist $s_0>0$ and $C_0=C_0(\MM,T,\| g \|_{{\rm C}^2(\MM)} )>0$ such that the following Carleman estimate  
\beas
&&  I(w) :=  s \| {\rm e}^{-s \eta}  \psi^{1 \slash 2} \nabla_g w \|_{{\rm L}^2(\MM_T)^n}^2 +  s^3 \| {\rm e}^{-s \eta} \psi^{3 \slash 2} w \|_{{\rm L}^2(\MM_T)}^2 + \sum_{j=1,2} \| M_j e^{-s \eta} w \|_{{\rm L}^2(\MM_T)}^2 \nonumber\\
& \leq & C_0 \left( \| {\rm e}^{-s \eta} B w \|_{{\rm L}^2(\MM_T)}^2 
+ s \| {\rm e}^{-s \eta} \psi \png w \|_{{\rm L}^2(\Sigma_0)}^2 \right),
\eeas
holds true for every $s \geq s_0$ whenever $w \in {\rm L}^2(-T,T;{\rm H}_0^1(\MM))$ and $B w \in {\rm L}^2(\MM_T)$.
\end{pr}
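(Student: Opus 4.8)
The plan is to derive this Carleman estimate via the standard conjugation-and-splitting method adapted to the Laplace--Beltrami setting. I would begin by writing $v := {\rm e}^{-s\eta} w$, so that $M v = {\rm e}^{-s\eta} B w =: f$, and the problem reduces to estimating $v$ in terms of $f$ and the boundary data. Since $M = M_1 + M_2$, taking squared norms gives
\beq
\label{plan-split}
\| M_1 v \|^2_{{\rm L}^2(\MM_T)} + \| M_2 v \|^2_{{\rm L}^2(\MM_T)} + 2 \Pre{\langle M_1 v , M_2 v \rangle_{{\rm L}^2(\MM_T)}} = \| f \|^2_{{\rm L}^2(\MM_T)}.
\eeq
The heart of the argument is to show that the cross term $2\Pre{\langle M_1 v, M_2 v\rangle}$ dominates, up to the boundary contribution on $\Sigma_0$, the weighted gradient and zeroth-order terms $s\| \psi^{1/2}\nabla_g v\|^2 + s^3 \| \psi^{3/2} v\|^2$ that constitute $I(w)$ (once the factor ${\rm e}^{-s\eta}$ is reabsorbed). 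Collecting \eqref{plan-split} and the positivity of the two squared-norm terms then yields the stated inequality.

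The main technical work lies in expanding $\langle M_1 v, M_2 v\rangle$ into its constituent scalar products, one for each pairing of the three terms in $M_1$ with the three terms in $M_2$. First I would integrate by parts in each pairing, carefully tracking the boundary terms on $\Sigma = (-T,T)\times\partial\MM$ and the terms where time derivatives fall on the temporal weight factors in \eqref{c4}. Because $v$ vanishes on $\partial\MM$ (as $w \in {\rm L}^2(-T,T;{\rm H}_0^1(\MM))$) and $v$ vanishes at $t = \pm T$ by the blow-up of $\eta$, most boundary and endpoint contributions drop out; the surviving boundary term is proportional to $s \int_{\Sigma} \psi |\png v|^2 \langle\nabla_g\vartheta,\nu\rangle_g\, d\sigma dt$, which by the geometric condition \eqref{caH3} can be bounded above on $\Gamma\setminus\Gamma_0$ (where the integrand is non-positive) and is controlled on $\Gamma_0$ by the right-hand boundary term. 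This is exactly where the subboundary hypothesis \eqref{caH3} enters.

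After integration by parts, the leading-order terms in $s$ should organize into a positive quadratic form. Here hypotheses \eqref{caH1} and \eqref{caH2} are decisive: the pseudoconvexity-type condition \eqref{caH1b} guarantees that the second-order (Hessian) contributions, namely terms of the form $s \mathbb{D}^2\vartheta(\nabla_g v, \overline{\nabla_g v})$ together with $s|\langle\nabla_g\vartheta,\nabla_g v\rangle_g|^2$, are bounded below by a positive multiple of $s\psi|\nabla_g v|_g^2$, while \eqref{caH2b} provides the lower bound $\beta$ ensuring the zeroth-order term $s^3\psi^3 |\nabla_g\vartheta|_g^4 |v|^2$ dominates the negative $s^3$-contributions and yields the $s^3\|\psi^{3/2}v\|^2$ term. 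The remaining terms -- those arising from the temporal weights and from commutators involving $\Delta_g\eta$ -- are lower order in $s$ and in the powers of $\psi$, hence absorbable into the two leading positive terms by taking $s \geq s_0$ large. I expect the most delicate step to be this matching of signs and powers of $s$: one must verify that every error term generated by the integrations by parts (including those coming from the non-constant coefficients $g^{jk}$ and the metric factors $\sqrt{\det g}$, which contribute the dependence $C_0 = C_0(\MM, T, \|g\|_{{\rm C}^2(\MM)})$) is genuinely subordinate to the Hessian-positivity and gradient-nondegeneracy terms, so that a single large choice of $s_0$ closes all the estimates simultaneously. Once the positive lower bound on the cross term is established, combining it with \eqref{plan-split} and reverting from $v$ to $w$ completes the proof.
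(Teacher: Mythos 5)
Your proposal follows essentially the same route as the paper's own proof: conjugation $v={\rm e}^{-s\eta}w$, the identity $\|Mv\|^2=\|M_1v\|^2+\|M_2v\|^2+2\,\Pre{\langle M_1v,M_2v\rangle}$, expansion of the cross term into the nine pairings with integration by parts, the boundary term handled by \eqref{caH3}, positivity of the Hessian and gradient terms from \eqref{caH1b}--\eqref{caH2b}, and absorption of the lower-order remainders for large parameter. The only detail you omit is that the paper also takes the second parameter $\gamma$ in the weight \eqref{c4} large (not just $s$) to absorb certain remainder terms such as $s^3\gamma^3\int\psi^3|w|^2$, but this is a refinement of the same absorption step rather than a different argument.
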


\subsection{Twisted waveguide}
For $L \in (\ell,+\infty)$ fixed, set $r:=(\ell+L) \slash 2$ and let $\MM$ be any smooth bounded open subset of $\Omega(L)$ containing $\Omega(r)$:
$$ \Omega(\ell) \subset \Omega(r) \subset \MM \subset \Omega(L). $$ 
Pick $a=(a_1,a_2,a_3) \in \R^3 \setminus \MM$ such that 
\bel{c20}
d_3=a_3 - L>0, 
\ee
and put 
\bel{c21}
\vartheta(x) = | x - a |^2,\ x \in \MM. 
\ee
For further reference we notice from \eqref{c4} and \eqref{c20}-\eqref{c21} that $\eta_0=\eta(0,.)$ fulfills
\bel{c22}
\eta_0'(x) = 2 \frac{(a_3-x_3){\rm e}^{|x-a|^2}}{T^2} \geq \frac{2 d_3}{T^2},\ x \in \MM.
\ee
Moreover, in the framework of Theorem \ref{thm2} we shall impose in addition to \eqref{c20} that $a_{\tau}=(a_1,a_2) \in \R^2 \backslash \overline{\omega}$ be taken far enough from $\omega$ in such a way that 
\bel{c23}
d_{\tau} = {\rm dist}(a_{\tau},\omega)=\inf_{x_{\tau} \in \omega} |x_{\tau} - a_{\tau}|
\ee
is so large relative to $L$ and $d_3$ that
\bel{c24} 
\omega \cap B \left( a_{\tau},d_{\tau} + \frac{4L(L+d_3)}{d_{\tau}} \right) \subsetneq \omega. 
\ee 
 
By \cite{BEL1}[Lemma 4.1] we have
$4 \mathbb{D}^2\vartheta(g^{-1} \xi, g^{-1}\xi) =  \{ \mathfrak{g}, \{\mathfrak{g},\vartheta \} \}(x,\xi)$, where 
$\mathfrak{g}(x,\xi):=\sum_{j,k=1}^n g^{j,k}(x) \xi_j \xi_j$ for $x \in \MM$ and $\xi \in \R^n$. This, combined with \eqref{c0c} and the fact that
$g^{jk}=\delta_{jk}+\dot{\theta}(x_3)b^{jk}$, $1 \leq j,k \leq 3$, yields
$$ \mathbb{D}^2 \vartheta(g^{-1}\xi,g^{-1}\xi)=\mathrm{Hess}(\vartheta)(\xi,\xi)+\mathcal{O}(\| \dot{\theta}\|_{\text{C}^1(I_L)} | \xi |^2),$$
where $\mathrm{Hess}(\vartheta)$ denotes the Hessian of $\vartheta$ wrt the Euclidian metric. Therefore we have
$$
\mathbb{D}^2 \vartheta(g^{-1} \xi,g^{-1} \xi)=2 | \xi |^2+\mathcal{O}(\| \dot{\theta} \|_{\text{C}^1(I_{\ell})} | \xi |^2),\ \xi \in \R^n,
$$
by an elementary calculation, whence $\mathbb{D}^2 \vartheta(\xi,\xi) \geq C | \xi |^2$ for all $\xi \in \R^n$ and some constant $C>0$, upon taking $\| \dot{\theta} \|_{\text{C}^1(I_{\ell})}$ sufficiently small. Namely, there exists $\eps>0$, depending only on $\omega$, $\ell$ and $a$ such that \eqref{caH1} holds true for every $\theta \in \Theta_{\ell,\eps}$.

\section{Inverse problem}
\label{sec-IP}
In this section we investigate the inverse problem of identifying $\ddot{\theta}$ from partial knowledge of the solution $q$ to \eqref{a1}. We shall prove that $\ddot{\theta}$ can be determined uniquely and stably from some suitable localized, either interior or boundary, measurement of $q$, as stated in Theorem \ref{thm1} and Theorem \ref{thm2}.
In both cases the method used in the derivation of the corresponding stability estimate preliminarily requires that the problem under study be centered around $t=0$, then appropriately ``linearized" and differentiated w.r.t. the time variable $t$.

%
\subsection{Time symmetrization and the linearized problem}
\paragraph{Time symmetrization.}
Following the idea of \cite{BP} we extend \eqref{a1} to negative values of the time variable $t$ in order to center the problem around $t=0$. 
Indeed, for any real valued initial data $q_0$, the solution $q$ to \eqref{a1} extended to $Q := (-T,T) \times \Omega$ by setting
\bel{b0}
q(t,x):=\overline{q(-t,x)}\ {\rm for\ a.e.}\ (t,x)  \in \R_-^* \times \Omega,
\ee
is easily seen to be solution to the system
\bel{b1}
\left\{  \begin{array}{ll} -\imath q'(t,x) +  H_{\dot{\theta}} q(t,x) = 0, & (t,x) \in Q 
  \\ q(t,x) = h(t,x), & (t,x) \in \Sigma := (-T,T) \times \Gamma  \\ q(0,x) = q_0(x), & x \in \Omega, \end{array} \right.
\ee
provided we have preliminarily set
$$ h(t,x):=\overline{h(-t,x)}\ {\rm for\ a.e.}\ (t,x)  \in \R_-^* \times \Gamma. $$

\paragraph{The ``linearized" problem.} Let $\tilde{q}$ denote the solution to \eqref{b1} where
$\theta$ and $q_0$ are replaced by $\tilde{\theta}$ and $\tilde{q}_0$, a straightforward computation shows that $y:=q-\tilde{q}$ satisfies the ``linearized" system 
\bel{b3}
\left\{  \begin{array}{ll} -\imath  y'(t,x) +  H_{\dot{\theta}} y(t,x) = R(t,x), & (t,x) \in Q  \\ y(t,x) = 0, & (t,x) \in \Sigma \\ y(0,x) = (q_0- \tilde{q}_0)(x), & x \in \Omega, \end{array} \right.
\ee
where
\bel{b3star}
\alpha := \dot{\theta}-\dot{\tilde{\theta}}\ {\rm and}\ R := (H_{\dot{\tilde{\theta}}}-H_{\dot{\theta}}) \tilde{q}= (\alpha ((\dot{\theta}+\dot{\tilde{\theta}}) \pf + 2  \pd_3 ) + \dot{\alpha} ) \pf \tilde{q}.
\ee
Further, by differentiating \eqref{b3} w.r.t. $t$, we get that $z := y'$ is solution to
\bel{b4}
\left\{  \begin{array}{ll} -\imath z'(t,x) +  H_{\dot{\theta}} z(t,x) = R'(t,x), & (t,x) \in Q \\ z(t,x) = 0, & (t,x) \in \Sigma \\ z(0,x) = \imath ( R(0,x) - H_{\dot{\theta}} y(0,x) ), & x \in \Omega, \end{array} \right.
\ee
with
\bel{b4b}
R' :=  (\alpha ((\dot{\theta}+\dot{\tilde{\theta}}) \partial_{\varphi} + 2  \partial_3 ) + \dot{\alpha} ) \pf \tilde{q}',
\ee
according to \eqref{b3star}.

\subsection{Proof of Theorem \ref{thm1}}
Let $\omega_1$ be any non empty open subset of $\omega_0$ and pick two cut-off functions
$\rho \in {\rm C}^2(\omega;[0,1])$ and $\mu \in {\rm C}^2(\R;[0,1])$ satisfying
\bel{b4c}
\rho(x_{\tau}) := \left\{ \begin{array}{ll} 1 & {\rm in}\ \omega_1  \\ 0 & {\rm in}\  \omega \backslash \omega_0 \end{array} \right.\
{\rm and}\
\mu(x_3) := \left\{ \begin{array}{ll} 1 & {\rm in}\ I_{\ell}  \\ 0 & {\rm in}\  \R \backslash I_{r}, \end{array} \right.
\ee
in such a way that $\chi:=\rho \mu \in {\rm C}^{2}(\Omega;[0,1])$ fulfills
\bel{b4d}
\chi(x) = \left\{ \begin{array}{ll} 1 & {\rm in}\ \Omega_{1}(\ell)  \\ 0 & {\rm in}\  \Omega \setminus \Omega_{0}(r). \end{array} \right.
\ee
By left multiplying \eqref{b4} by $\chi$, we get that $\mz = \chi z$ is solution of the system 
\bel{b5}
\left\{  \begin{array}{ll} -\imath \mz'(t,x) +  H_{\dot{\theta}} \mz(t,x) = \chi R'(t,x) + [H_{\dot{\theta}}, \chi] z, & (t,x) \in \MM_T \\ \mz(t,x) = 0, & (t,x) \in \pd \MM_T := (-T,T) \times \pd \MM \\ \mz(0,x) = -\imath \chi (R(0,x)- H_{\dot{\theta}} y(0,x)), & x \in \MM. \end{array} \right.
\ee
From this and Proposition \ref{pr-C} then follows that
\beas 
&&  I(\mz) = s \| {\rm e}^{-s \eta}  \psi^{1 \slash 2} \nabla_{g} \mz \|_{{\rm L}^2(\MM_T)^3}^2 +  s^3 \| {\rm e}^{-s \eta} \psi^{3 \slash 2} \mz \|_{{\rm L}^2(\MM_T)}^2 + \sum_{j=1,2} \| M_j e^{-s \eta} \mz \|_{{\rm L}^2(\MM_T)}^2 \\
& \leq & C_0 \| {\rm e}^{-s \eta} ( \chi R'(t,x) + [H_{\dot{\theta}}, \chi] z ) \|_{{\rm L}^2(\MM_T)}^2,\ s \geq s_0.
\eeas
In light of \eqref{b4b} and the expression $[H_{\dot{\theta}},\chi] = -( 2 \nabla_{g} \chi \cdot \nabla + \Delta_{g} \chi )$, this yields
\bel{b6}
I(\mz) \leq  C_1  \sum_{m=0,1}  \left( \| {\rm e}^{-s \eta} \chi \alpha^{(m)} \|_{{\rm L}^2(-T,T;{\rm L}^2(\Omega_0(\ell)))}^2 +
\| {\rm e}^{-s \eta} \nabla^m z \|_{{\rm L}^2(-T,T;{\rm L}^2(\Omega_{0}(r) \setminus \Omega_1(\ell)))}^2 \right),\ s \geq s_0,
\ee
for some constant $C_1=C_1(\omega,L,T,\eps)>0$, where $\alpha^{(0)}$ (resp. $\alpha^{(1)}$) stands for $\alpha$ (resp. $\dot{\alpha}$). \\
Having established the Carleman estimate \eqref{b6}, the remaining of the proof now decomposes into four steps.

The first step involves relating $\dot{\alpha}$ to $\mz(0,.)$ with the aid of \eqref{b5}.
Namely, by multiplying the initial condition in \eqref{b5} by ${\rm e}^{-s \eta_0}$, squaring and integrating over $\MM$,
we deduce from \eqref{b3star} that 
$\| {\rm e}^{-s \eta_0} \pf \tilde{q}_0 \chi \dot{\alpha} \|_{{\rm L}^2(\Omega_0(\ell))}^2$ is majorized (up to some positive multiplicative constant $C_2$ depending only on $\omega$, $L$, $T$ and $\eps$) for all $s>0$ by the sum
$ \| {\rm e}^{-s \eta_0} \mz(0,.) \|_{{\rm L}^2(\MM)}^2  + \| {\rm e}^{-s \eta_0} H_{\dot{\theta}} y(0,.) \|_{{\rm L}^2(\Omega_0(r))}^2 +  \| {\rm e}^{-s \eta_0} \chi \alpha \|_{{\rm L}^2(\Omega_0(\ell))}^2$.
Taking into account that $| \pf \tilde{q}_0 | \chi \geq \mathfrak{q} \chi$, this yields
\bea
& & \| {\rm e}^{-s \eta_0} \chi \dot{\alpha} \|_{{\rm L}^2(\Omega_0(\ell))}^2 \nonumber \\
& \leq & C_2 \left( \| {\rm e}^{-s \eta_0} \mz(0,.) \|_{{\rm L}^2(\MM)}^2  + \| {\rm e}^{-s \eta_0} H_{\dot{\theta}} y(0,.) \|_{{\rm L}^2(\Omega_0(r))}^2 + 
\| {\rm e}^{-s \eta_0} \chi \alpha \|_{{\rm L}^2(\MM)}^2 \right), \label{b7}
\eea
where $C_2=C_2(\omega,L,T,\eps)>0$.
The next step is to derive a suitable upper bound on $\| {\rm e}^{-s \eta_0} \mz(0,.) \|_{{\rm L}^2(\MM)}$ involving $\| {\rm e}^{-s \eta} \chi \dot{\alpha} \|_{{\rm L}^2(\Omega_0(\ell))}$, from \eqref{b6}.

\begin{lemma}
\label{lm1}
There exists a constant $C_3=C_3(\omega,L,T,\eps)>0$ such that we have
\beas
& & \| {\rm e}^{-s \eta_0} \mz(0,.) \|_{{\rm L}^2(\MM)}^2 \nonumber \\
& \leq & C_3  s^{-3/2}  \sum_{m=0,1} \left( \| {\rm e}^{-s \eta} \chi \alpha^{(m)} \|_{{\rm L}^2(-T,T;{\rm L}^2(\Omega_0(\ell)))}^2 + \| {\rm e}^{-s \eta} \nabla^m z \|_{{\rm L}^2(-T,T;{\rm L}^2(\Omega_{0}(r) \setminus \Omega_1(\ell)))}^2 \right),
\eeas
for all $s \geq s_0$.
\end{lemma}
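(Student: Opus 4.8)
The goal is to bound $\| {\rm e}^{-s \eta_0} \mz(0,.) \|_{{\rm L}^2(\MM)}^2$ by the right-hand side of the Carleman estimate \eqref{b6}, gaining a negative power $s^{-3/2}$ of the large parameter. The natural route is to relate the weighted ${\rm L}^2$-norm of $\mz$ at the single time slice $t=0$ to the weighted space-time norms that appear in $I(\mz)$. The standard device here is an energy-type identity: the plan is to write $\| {\rm e}^{-s\eta_0}\mz(0,.)\|^2_{{\rm L}^2(\MM)}$ as an integral over $\MM_T$ of the $t$-derivative of $t \mapsto \| {\rm e}^{-s\eta(t,.)}\mz(t,.)\|^2$ (or of a suitably weighted version), using that the weight ${\rm e}^{-s\eta}$ blows up as $t \to \pm T$ so that the boundary terms at $t=\pm T$ vanish. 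Expanding the $t$-derivative produces two kinds of terms: one involving $\png_t \eta \cdot |\mz|^2$ and one involving $\Pre{\overline{\mz}\, \mz'}$.

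First I would carry out this integration-by-parts (or fundamental-theorem-of-calculus) step in $t$, being careful that $\eta_0(x) = \eta(0,x)$ sits at the maximum of ${\rm e}^{-s\eta(t,x)}$ over $t$, so that ${\rm e}^{-s\eta_0(x)} \geq {\rm e}^{-s\eta(t,x)}$ pointwise; this is what allows the single-slice quantity to be controlled by the space-time integral. The term $\mz'$ must then be rewritten using the PDE \eqref{b5}: from $-\imath \mz' + H_{\dot\theta}\mz = \chi R' + [H_{\dot\theta},\chi]z$ one substitutes $\mz' = -\imath(H_{\dot\theta}\mz - \chi R' - [H_{\dot\theta},\chi]z)$, so that $\Pre{\overline{\mz}\,\mz'}$ turns into terms controlled either by $\|\nabla_g \mz\|$ and $\|\mz\|$ — which are exactly the quantities appearing with positive powers of $s$ in $I(\mz)$ — or by the source terms $\chi R'$ and $[H_{\dot\theta},\chi]z$, which by \eqref{b4b} and the commutator formula are bounded by the right-hand side of \eqref{b6}. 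The operator $M_j {\rm e}^{-s\eta}\mz$ contributions in $I(\mz)$ should absorb the genuinely second-order pieces coming from $H_{\dot\theta}\mz$.

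The power-counting is the delicate bookkeeping step. In $I(\mz)$ the gradient term carries $s^1 \psi$ and the zeroth-order term carries $s^3 \psi^3$; since $\psi \sim {\rm e}^{\gamma\vartheta}/(T^2-t^2)$ is bounded below on the region of interest, one can trade powers of $s$ for the target factor $s^{-3/2}$ by using, e.g., $s\,\psi \leq s^{-3/2}(s^{5/2}\psi)$ and $\psi \leq C\psi^{3}$ after multiplying and dividing by suitable powers of $\psi$, together with the uniform bound $\psi \geq \psi_{\min}>0$ on $\supp\chi \subset \Omega_0(r)$. The main obstacle I expect is precisely this calibration: one must verify that after replacing $\mz'$ via the equation, every resulting term is dominated by $s^{-3/2} I(\mz)$ — in particular that the commutator term $[H_{\dot\theta},\chi]z = -(2\nabla_g\chi\cdot\nabla + \Delta_g\chi)z$, which is supported in $\Omega_0(r)\setminus\Omega_1(\ell)$ and involves $\nabla^m z$ for $m=0,1$, lands inside the second group of terms on the right of the Lemma rather than being reabsorbed into $I(\mz)$ (which would be circular). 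Keeping the cut-off support bookkeeping straight, and confirming that the weight ratio ${\rm e}^{-s\eta_0}/{\rm e}^{-s\eta} \leq 1$ is used in the right direction so the final estimate is genuinely in terms of $\eta$ (not $\eta_0$) on the source side, is where the care is needed.
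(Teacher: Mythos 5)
Your overall architecture coincides with the paper's: write $\|{\rm e}^{-s\eta_0}\mz(0,.)\|^2_{{\rm L}^2(\MM)}$ as $\int_{-T}^0\partial_t\|{\rm e}^{-s\eta(t,.)}\mz(t,.)\|^2_{{\rm L}^2(\MM)}\,\dd t$ (the boundary term at $t=-T$ vanishes because ${\rm e}^{-s\eta}\to 0$ there; note the weight \emph{vanishes} as $t\to\pm T$, it does not ``blow up'' --- $\eta$ does), eliminate the time derivative through the equation, apply Cauchy--Schwarz, and invoke \eqref{b6}. The gap sits exactly at the step you flag as ``the main obstacle'' and leave unresolved: the calibration fails along the route you describe. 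Substituting the unconjugated equation $\mz'=-\imath(H_{\dot\theta}\mz-\chi R'-[H_{\dot\theta},\chi]z)$ and integrating by parts in $x$, the genuinely second-order piece contributes nothing, since $\int_{\MM}{\rm e}^{-2s\eta}|\nag\mz|_g^2\,\dvg$ is real; what survives is the first-order cross term $s\,\Pim{\int_{\MM}{\rm e}^{-2s\eta}\,\overline{\mz}\,\langle\nag\eta,\nag\mz\rg\,\dvg}$ produced by the gradient of the weight. Bounding it by the two quantities you propose to use --- the terms $s\|\psi^{1/2}{\rm e}^{-s\eta}\nag\mz\|^2$ and $s^3\|\psi^{3/2}{\rm e}^{-s\eta}\mz\|^2$ of $I(\mz)$ --- yields at best $s\cdot s^{-1/2}\cdot s^{-3/2}\,I(\mz)=s^{-1}I(\mz)$, not $s^{-3/2}I(\mz)$. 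So, as written, your plan proves the lemma with $s^{-1}$ in place of $s^{-3/2}$; that weaker bound would in fact still feed the absorption argument in the proof of Theorem \ref{thm1}, but it is not the stated estimate.

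The missing idea is to conjugate \emph{before} substituting: set $\zeta:={\rm e}^{-s\eta}\mz$ and use the operator $M_1=\imath\partial_t+\lag+s^2|\nag\eta|_g^2$ from \eqref{c5}. Since $\int_{\MM}(\lag\zeta)\overline{\zeta}\,\dvg$ and $s^2\int_{\MM}|\nag\eta|_g^2|\zeta|^2\,\dvg$ are real, one has the exact identity $\|{\rm e}^{-s\eta_0}\mz(0,.)\|^2_{{\rm L}^2(\MM)}=2\,\Pim{\int_{-T}^0\!\int_{\MM}(M_1\zeta)\overline{\zeta}\,\dvg\,\dd t}$: no cross term ever appears, because the troublesome first-order operator $2s\nabla\eta\cdot\nabla_g$ lives in $M_2$, not in $M_1$. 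Cauchy--Schwarz then gives the bound $2\|M_1{\rm e}^{-s\eta}\mz\|_{{\rm L}^2(\MM_T)}\|{\rm e}^{-s\eta}\mz\|_{{\rm L}^2(\MM_T)}$; the first factor squared is itself one of the terms of $I(\mz)$, and the second factor is at most $Cs^{-3/2}\bigl(s^3\|\psi^{3/2}{\rm e}^{-s\eta}\mz\|^2_{{\rm L}^2(\MM_T)}\bigr)^{1/2}$ because $\inf_{\MM_T}\psi>0$, whence $\mathcal{I}\le c\,s^{-3/2}I(\mz)$ and the conclusion via \eqref{b6}. Alternatively, you can keep your route and repair it by rewriting the cross term as $\frac12\bigl(\Pim{\int_{\MM}\overline{\zeta}\,M_2\zeta\,\dvg}-s\int_{\MM}\eta'|\zeta|^2\,\dvg\bigr)$ and spending $\|M_2{\rm e}^{-s\eta}\mz\|^2\le I(\mz)$ on it. Either way, the terms $\|M_j{\rm e}^{-s\eta}\mz\|^2$ of $I(\mz)$ must be used on this \emph{first-order} cross term; they are needed for more than the ``genuinely second-order pieces'' to which your sketch assigns them.
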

\begin{proof}
Set $\zeta := {\rm e}^{-s \eta} \mz$ and recall that $\zeta(-T,.)=0$ so that
$$
\mathcal{I}  :=  \| {\rm e}^{-s \eta_0} \mz(0,.) \|_{{\rm L}^2(\MM)}^2 = \int_{-T}^{0} \int_{\MM} \pd_{\tau} |\zeta(t,x)|^2 {\rm d} x {\rm d} t = 2\Pre{\int_{-T}^{0} \int_{\MM} \zeta_{\tau}(t,x) \overline{\zeta(t,x)} {\rm d} x {\rm d} t}.
$$
In view of \eqref{c5} this identity may be rewritten as
$$ \mathcal{I} = 2 \Pim{\int_{-T}^0 \int_{\MM} (M_1 \zeta) \overline{\zeta}(t,x) {\rm d} t {\rm d} x}, $$
so we get
$|\mathcal{I}| \leq 2 \| M_1 {\rm e}^{-s \eta} \mz \|_{{\rm L}^2(\MM_T)} \| {\rm e}^{-s \eta} \mz \|_{{\rm L}^2(\MM_T)}$ from the Cauchy-Schwarz inequality. As $\inf_{(t,x) \in Q} \psi(t,x) >0$ according to \eqref{c4}, there is necessarily a constant $c>0$, depending only on $T$, such that we have
$$
|\mathcal{I}| \leq c s^{-3 \slash 2}  \left( s^3  \| {\rm e}^{-s \eta} \psi^{3 \slash 2}  \mz \|_{{\rm L}^2(\MM_T)}^2  + \| M_1  {\rm e}^{-s \eta} \mz \|_{{\rm L}^2(\MM_T)}^2 \right) \leq c s^{-3 \slash 2} I(\mz),\ s>0.
$$
This, combined with \eqref{b6}, proves the result.
\end{proof}

The third step consists of showing that $\| {\rm e}^{-s \eta_0} \chi \alpha \|_{{\rm L}^2(\MM)}$ can be made negligible w.r.t. $\| {\rm e}^{-s \eta_0} \chi \dot{\alpha} \|_{{\rm L}^2(\MM)}$ upon taking $s$ sufficiently large.

\begin{lemma}
\label{lm2}
We have
$$ \| {\rm e}^{-s \eta_0} \chi \alpha \|_{{\rm L}^2(\Omega_0(\ell))} \leq \frac{T}{\sqrt{d_3 s}} \| {\rm e}^{-s \eta_0} \chi \dot{\alpha} \|_{{\rm L}^2(\Omega_0(\ell))},\ s>0, $$
where $d_3$ is defined in \eqref{c20}.
\end{lemma}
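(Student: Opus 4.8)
The plan is to turn the estimate into a one-dimensional weighted Poincar\'e inequality in the variable $x_3$, the gain in $s$ coming from an integration by parts against the exponential weight. Two structural facts make this work. First, since $\theta,\tilde\theta\in\Theta_{\ell,\eps}$ we have $\dot\theta,\dot{\tilde\theta}\in{\rm C}_0(I_\ell)$, so by \eqref{b3star} the function $\alpha=\dot\theta-\dot{\tilde\theta}$ depends on $x_3$ alone, is supported in $I_\ell$, and vanishes at the endpoints $x_3=\pm\ell$. Second, on $\Omega_0(\ell)=\omega_0\times I_\ell$ we have $\mu\equiv1$ by \eqref{b4c}, whence $\chi=\rho(x_\tau)$ is independent of $x_3$ there and $\partial_3\chi=0$.

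I would then exploit the identity obtained by integrating $\partial_3\!\left({\rm e}^{-2s\eta_0}\chi^2\alpha^2\right)$ over $\Omega_0(\ell)$. The $x_3$-integration produces only the boundary values at $x_3=\pm\ell$, which vanish because $\alpha$ does; and since $\partial_3\chi=0$ on $\Omega_0(\ell)$, only the weight and $\alpha^2$ are differentiated. Using $\partial_3{\rm e}^{-2s\eta_0}=-2s\,\eta_0'\,{\rm e}^{-2s\eta_0}$ this gives
\[ s\int_{\Omega_0(\ell)}\eta_0'\,{\rm e}^{-2s\eta_0}\chi^2\alpha^2\,{\rm d}x=\int_{\Omega_0(\ell)}{\rm e}^{-2s\eta_0}\chi^2\,\alpha\,\dot\alpha\,{\rm d}x. \]
To conclude I would insert the uniform lower bound $\eta_0'\geq 2d_3/T^2$ from \eqref{c22} on the left, bound the right-hand side by Cauchy--Schwarz as $\|{\rm e}^{-s\eta_0}\chi\alpha\|_{{\rm L}^2(\Omega_0(\ell))}\,\|{\rm e}^{-s\eta_0}\chi\dot\alpha\|_{{\rm L}^2(\Omega_0(\ell))}$, and cancel one factor of $\|{\rm e}^{-s\eta_0}\chi\alpha\|_{{\rm L}^2(\Omega_0(\ell))}$, which yields the announced bound.

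The only delicate point is the justification of the integration by parts, and it is exactly here that the hypotheses are consumed: one needs $\alpha(\pm\ell)=0$ (membership in $\Theta_{\ell,\eps}$, i.e. $\dot\theta\in{\rm C}_0(I_\ell)$) so that the boundary term drops, and the $x_3$-independence of $\chi$ on $\Omega_0(\ell)$ so that no $\partial_3\chi$ term appears. I note that the mechanism is robust: differentiating the weight produces a full power of $s$, so the argument in fact delivers a multiplicative constant of order $s^{-1}$, comfortably accounting for the stated $s^{-1/2}$ decay in the large-$s$ regime relevant to the sequel. In particular the estimate rests only on \eqref{c22} and a single Cauchy--Schwarz, and does not invoke the Carleman inequality of Proposition \ref{pr-C}.
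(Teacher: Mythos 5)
Your proof is correct and takes essentially the same route as the paper's: the identical integration by parts against the weight (resting on $\alpha(\pm\ell)=0$ and the $x_3$-independence of $\chi=\rho$ on $\Omega_0(\ell)$, which is exactly the paper's identity \eqref{b9}), followed by the lower bound \eqref{c22}. The only cosmetic difference is that you close with Cauchy--Schwarz and cancellation of one factor where the paper uses Young's inequality and absorption; both variants in fact produce a constant of order $s^{-1}$, which (as you note) dominates the stated $s^{-1/2}$ rate in the large-$s$ regime used in the sequel.
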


\begin{proof}
Bearing in mind that $\alpha(\pm \ell)=0$, an integration by parts yields
$$ 
\int_{\Omega_0(\ell)} {\rm e}^{-2 s \eta_0} \chi^2 \alpha  \dot{\alpha}  \dd x  
=\int_{\Omega_0(\ell)} {\rm e}^{-2 s \eta_0} \rho^2 \alpha  \dot{\alpha}  \dd x
= -\frac{1}{2}\int_{\Omega_0(\ell)}  \pd_{x_3} ({\rm e}^{-2 s \eta_0}) \rho^2 \alpha^2 \dd x.
$$
Therefore,
\bel{b9}
\int_{\Omega_0(\ell)} {\rm e}^{-2 s \eta_0} \chi^2 \alpha  \dot{\alpha}  \dd x
=\int_{\Omega_0(\ell)} {\rm e}^{-2 s \eta_0}  s \dot{\eta}_0 \rho^2 \alpha^2 \dd x.
=s \int_{\Omega_0(\ell)} {\rm e}^{-2 s \eta_0} \dot{\eta}_0 \chi^2 \alpha^2 \dd x.
\ee
Further, as
$$\int_{\Omega_0(\ell)} {\rm e}^{-2 s \eta_0} \chi^2 \alpha  \dot{\alpha}  \dd x
\leq \frac{d_3}{T^2} s  \int_{\Omega_0(\ell)} {\rm e}^{-2 s \eta_0} \chi^2 \alpha^2  \dd x + \frac{T^2}{d_3 s}  \int_{\Omega_0(\ell)} {\rm e}^{-2 s \eta_0} \chi^2 \dot{\alpha}^2 \dd x,\ s>0,$$
from Young inequality, \eqref{b9} yields 
$$ s \int_{\Omega_0(\ell)} {\rm e}^{-2s \eta_0} \left( \dot{\eta}_0 - \frac{d_3}{T^2} \right) \chi^2 \alpha^2  \dd x \leq  \frac{T^2}{d_3 s}  \int_{\Omega_0(\ell)} {\rm e}^{-2 s \eta_0} \chi^2 \dot{\alpha}^2 \dd x,\ s>0.$$
This, together with \eqref{c22}, completes the proof.
\end{proof}
In view of \eqref{b7} and Lemmae \ref{lm1}-\ref{lm2} we thus may find some constant $C_4=C_4(\omega,L,T,\eps)>0$ such that the following estimate
$$
\| {\rm e}^{-s \eta_0} \chi \dot{\alpha} \|_{{\rm L}^2(\Omega_0(\ell))}^2 \leq
C_4 \left( \| {\rm e}^{-s \eta_0} H_{\dot{\theta}} y(0,.) \|_{{\rm L}^2(\Omega_0(r))}^2 + \sum_{m=0,1} \| {\rm e}^{-s \eta} \nabla^m z \|_{{\rm L}^2(-T,T;{\rm L}^2(\Omega_0(r) \setminus \Omega_1(\ell)))}^2 \right),
$$
holds true upon choosing $s$ sufficiently large. Taking into account that $\| {\rm e}^{-s \eta} \|_{{\rm L}^{\infty}(\MM_T)} \leq 1$ and $\inf_{x \in \Omega_0(\ell)} {\rm e}^{-s \eta_0(x)} >0$ for any $s>0$, this entails 
\bel{b10}
\| \chi \dot{\alpha} \|_{{\rm L}^2(\Omega_0(\ell))}^2 \leq
C_5 \left( \|  H_{\dot{\theta}} y(0,.) \|_{{\rm L}^2(\Omega_0(r))}^2 + \|  z \|_{{\rm L}^2(-T,T;{\rm H}^1(\Omega_0(r) \setminus \Omega_1(\ell)))}^2 \right),
\ee
under the same conditions on $s$, where $C_5$ is some positive real number depending only on $\omega$, $L$, $T$ and $\eps$.

The last step involves noticing that
\bel{b11}
\| \chi \dot{\alpha} \|_{{\rm L}^2(\Omega_0(\ell))} \geq \mathfrak{q}\ {\rm mes}(\omega_1)
\| \dot{\alpha} \|_{{\rm L}^2(I_{\ell})},\ s >0,
\ee
since $\chi(x)=1$ for every $x \in \Omega_{1}(\ell)$ and $\dot{\alpha}(x_3)=0$ whenever $x_3 \in \R \setminus I_{\ell}$, by \eqref{a2} and \eqref{b4c}-\eqref{b4d}. 

Finally, bearing in mind that
$\|  H_{\dot{\theta}} y(0,.) \|_{{\rm L}^2(\Omega_0(r))} \leq C_6 \| y(0,.) \|_{{\rm H}^2(\Omega_0(r))}$, where the constant $C_6>0$ depends only on $\omega_0$ and $\eps$, we get the result by combining \eqref{b10} and \eqref{b11}.

\subsection{Proof of Theorem \ref{thm2}}
Set $\mz:= \mu z$, where $\mu$ denotes the cut off function defined by \eqref{b4c}. Since $\mz$ is solution to the system \eqref{b5} where $\mu$ is substituted for $\chi$, we have the following Carleman estimate
$$ I(\mz) \leq C_0 \left( \| {\rm e}^{-s \eta} ( \mu R'(t,x) + [H_{\dot{\theta}}, \mu] z ) \|_{{\rm L}^2(\MM_T)}^2
+ s  \| {\rm e}^{-s \eta}  \psi^{1 \slash 2} \partial_{\nu} \mz \|_{{\rm L}^2(\Sigma_0)}^2 \right), s \geq s_0,
$$
arising from Proposition \ref{pr-C}. Putting $\Omega(\ell,r):=\Omega(r) \setminus \Omega(\ell)$ and $\Sigma_0(r): =(-T,T) \times \Gamma_0(r)$ with
$\Gamma_0(r):= \Gamma_0 \cap  (\pd \omega \times I_r)$, there is thus a constant $C_1'>0$, depending only on, such that
we have
\bea
& & I(\mz) \nonumber \\
& \leq & C_1' \left[ \sum_{m=0,1} \left( \| {\rm e}^{-s \eta} \alpha^{(m)} \|_{{\rm L}^2(-T,T;{\rm L}^2(\Omega(\ell)))}^2 +  \| {\rm e}^{-s \eta} \nabla^m z \|_{{\rm L}^2(-T,T;{\rm L}^2(\Omega(\ell,r)))} \right) +
s \|  {\rm e}^{-s \eta} \partial_{\nu} z \|_{{\rm L}^2(\Sigma_0(r))}^2 \right], \label{b12}
\eea
for all $s \geq s_0$.

The strategy of the proof of Theorem \ref{thm1} applies unchanged provided \eqref{b6} is replaced by
\eqref{b12}.
Indeed, arguing as in the derivation of \eqref{b7} from \eqref{b5}, we find out some positive constant $C_2'=C_2'()$ satisfying
\bel{b13}
\| {\rm e}^{-s \eta_0} \pf \tilde{q}_0\ \dot{\alpha} \|_{{\rm L}^2(\Omega(\ell))}^2 \leq C_2' \left( \| {\rm e}^{-s \eta_0} \mz(0,.) \|_{{\rm L}^2(\MM)}^2  + \| {\rm e}^{-s \eta_0} H_{\dot{\theta}} y(0,.) \|_{{\rm L}^2(\Omega(r))}^2 + 
\| {\rm e}^{-s \eta_0} \alpha \|_{{\rm L}^2(\Omega(\ell))}^2 \right),
\ee
for all $s>0$. Further, by substituting \eqref{b12} for \eqref{b6} in the proof of Lemma \ref{lm1}, we find out that 
\bea
& & C_3' s^{3 \slash 2} \| {\rm e}^{-s \eta_0} \mz(0,.) \|_{{\rm L}^2(\MM)}^2 \nonumber \\
& \leq &  \sum_{m=0,1} \left( \| {\rm e}^{-s \eta_0} \alpha^{(m)} \|_{{\rm L}^2(-T,T;{\rm L}^2(\Omega(\ell)))}^2 + 
\| {\rm e}^{-s \eta} \nabla^{(m)} z \|_{{\rm L}^2(-T,T;{\rm L}^2(\Omega(\ell,r)))}^2 \right) \nonumber \\
& & \ \ \ \ \ \ \ \ \ \ \ \ + s \| {\rm e}^{-s \eta} \pd_{\nu} z  \|_{{\rm L}^2(\Sigma_0(r))}^2,
\label{b14}
\eea
for $s \geq s_0$ and some constant $C_3'=C_3'()>0$.
Last, by mimicking the proof of Lemma \ref{lm2}, we find out that
\bel{b15}
\| {\rm e}^{-s \eta_0} \alpha \|_{{\rm L}^2(\Omega(\ell))} \leq \frac{T^2}{d_3 s} \| {\rm e}^{-s \eta_0} \dot{\alpha} \|_{{\rm L}^2(\Omega(\ell))},\ s >0.
\ee
Now, upon setting
\bel{b16b}
{\rm obs} := \| {\rm e}^{-s \eta} \partial_{\nu} z \|_{{\rm L}^2(\Sigma_0(r))}^2 +  \| {\rm e}^{-s \eta_0} z \|_{{\rm L}^2(-T,T;{\rm H}^1(\Omega(\ell,r)))} + \| {\rm e}^{-s \eta_0} H_{\dot{\theta}} y(0,.)\|_{{\rm L}^2(\Omega(r))}^2,
\ee
it turns out by putting \eqref{b13}, \eqref{b14} and \eqref{b15} together, that
\bel{b16}
\| {\rm e}^{-s \eta_0}  \pf \tilde{q}_0\ \dot{\alpha} \|_{{\rm L}^2(\Omega(\ell))}^2 
\leq C_4' ( {\rm obs} + s^{-3/2} \| {\rm e}^{-s \eta_0}  \dot{\alpha} \|_{{\rm L}^2(\Omega(\ell))}^2 ), 
\ee
provided $s$ is sufficiently large. Here $C_4'$ is some positive constant depending only on $ $.
The next step of the proof involves splitting $\| {\rm e}^{-s \eta_0}  \dot{\alpha} \|_{{\rm L}^2(\Omega(\ell))}^2$ into the sum $\| {\rm e}^{-s \eta_0}  \dot{\alpha} \|_{{\rm L}^2(\Omega_0(\ell))}^2 +  \| {\rm e}^{-s \eta_0}  \dot{\alpha} \|_{{\rm L}^2(\Omega(\ell) \setminus \Omega_0(\ell))}^2$, where the subset $\Omega_0(\ell)=\omega_0 \times I_{\ell}$ is chosen in accordance with the following statement. 


\begin{lemma}
\label{lm3}
Assume \eqref{c23}-\eqref{c24}. Then there exist two proper open subsets of $\omega$,
$\omega_1  \subsetneq \omega_0$, with positive $\R^2$-Lebesgue measure, satisfying:
$$ 
m_1 = \sup_{x \in \Omega_1(\ell)} \eta_0(x) <  m_0 = \inf_{x \in \Omega(\ell) \backslash \Omega_0(\ell)} \eta_0(x). 
$$
\end{lemma}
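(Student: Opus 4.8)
The plan is to exploit the fact that, at $t=0$, the weight $\eta_0(x)=\eta(0,x)$ given by \eqref{c4} and \eqref{c21} reads $\eta_0(x)=T^{-2}\big(e^{2\gamma\|\vartheta\|_{\infty}}-e^{\gamma|x-a|^2}\big)$, so that it is a strictly decreasing function of $|x-a|^2=|x_{\tau}-a_{\tau}|^2+(a_3-x_3)^2$. Consequently, comparing values of $\eta_0$ over the cylinders $\Omega_1(\ell)=\omega_1\times I_{\ell}$ and $\Omega(\ell)\setminus\Omega_0(\ell)=(\omega\setminus\omega_0)\times I_{\ell}$ amounts to comparing the distances to $a$ of the points lying in these sets. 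The construction is therefore purely geometric: one must pick $\omega_1\subset\omega_0$ so that $\omega_1$ sits sufficiently far from $a_{\tau}$ while $\omega\setminus\omega_0$ stays close to $a_{\tau}$.

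First I would make the reduction explicit. Since $\eta_0$ decreases in $|x-a|^2$ and $x_3$ runs over $I_{\ell}=(-\ell,\ell)$ in both sets, the supremum $m_1$ is attained (in the limit) by minimizing the distance, i.e. at $x_3\to\ell$ and at the closest $x_{\tau}\in\omega_1$, whereas the infimum $m_0$ corresponds to maximizing the distance, i.e. at $x_3\to-\ell$ and at the farthest $x_{\tau}\in\omega\setminus\omega_0$. Writing $r_1:=\inf_{x_{\tau}\in\omega_1}|x_{\tau}-a_{\tau}|$ and $R_0:=\sup_{x_{\tau}\in\omega\setminus\omega_0}|x_{\tau}-a_{\tau}|$, the strict monotonicity of $\eta_0$ turns the inequality $m_1<m_0$ into
\[ r_1^2+(a_3-\ell)^2 > R_0^2+(a_3+\ell)^2, \quad\text{that is}\quad r_1^2-R_0^2 > 4a_3\ell, \]
where the right-hand side is precisely the gap $(a_3+\ell)^2-(a_3-\ell)^2$ produced by letting $x_3$ travel across the whole interval $I_{\ell}$ in opposite directions in the two sets.

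Next I would construct the sets as the ``far'' regions $\omega_0:=\{x_{\tau}\in\omega:\ |x_{\tau}-a_{\tau}|>\rho_0\}$ and $\omega_1:=\{x_{\tau}\in\omega:\ |x_{\tau}-a_{\tau}|>\rho_1\}$ for two radii $d_{\tau}<\rho_0<\rho_1$, so that automatically $\omega_1\subset\omega_0$, $R_0\le\rho_0$ and $r_1\ge\rho_1$; it then suffices to arrange $\rho_1^2-\rho_0^2>4a_3\ell$. Here \eqref{c24} enters: it yields a point $x_{\tau}^*\in\omega$ with $|x_{\tau}^*-a_{\tau}|\ge d_{\tau}+\tfrac{4L(L+d_3)}{d_{\tau}}$, so that for any $\rho_1<d_{\tau}+\tfrac{4L(L+d_3)}{d_{\tau}}$ this point lies in the open set $\omega_1$, which is therefore nonempty and of positive measure. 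Taking $\rho_1$ close to $d_{\tau}+\tfrac{4L(L+d_3)}{d_{\tau}}$ and $\rho_0$ close to $d_{\tau}$ gives
\[ \rho_1^2-\rho_0^2 \longrightarrow \Big(d_{\tau}+\tfrac{4L(L+d_3)}{d_{\tau}}\Big)^2-d_{\tau}^2 = 8L(L+d_3)+\Big(\tfrac{4L(L+d_3)}{d_{\tau}}\Big)^2 > 8L(L+d_3), \]
and since $a_3=L+d_3$ and $\ell<L$ one has $4a_3\ell=4(L+d_3)\ell<4L(L+d_3)<8L(L+d_3)$, so the required strict inequality $\rho_1^2-\rho_0^2>4a_3\ell$ holds for $\rho_0,\rho_1$ chosen close enough to the respective endpoints. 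It remains to check the qualitative requirements: $\omega_0$ is a proper subset of $\omega$ because the points of $\omega$ lying at distance $<\rho_0$ from $a_{\tau}$ (they exist since $\rho_0>d_{\tau}$) belong to $\omega\setminus\omega_0$; $\omega_0\supset\omega_1$ has positive measure; and the strict inclusion $\omega_1\subsetneq\omega_0$ follows because near the closest point $p\in\partial\omega$ realizing $d_{\tau}$ the distance function takes all values in an interval $(d_{\tau},d_{\tau}+\varepsilon_p)$ on $\omega$, hence some $x_{\tau}\in\omega$ has $|x_{\tau}-a_{\tau}|\in(\rho_0,\rho_1]$, i.e. $x_{\tau}\in\omega_0\setminus\omega_1$.

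The main obstacle is the competition built into the two constraints: $\omega_1$ must be pushed away from $a_{\tau}$ (to make $r_1$ large) while $\omega\setminus\omega_0$ must be kept near $a_{\tau}$ (to make $R_0$ small), and yet the inclusion $\omega_1\subset\omega_0$ forces $\omega\setminus\omega_0\subset\omega\setminus\omega_1$. The quantitative price to pay is exactly the $x_3$-induced gap $4a_3\ell$, and the sole role of hypotheses \eqref{c23}--\eqref{c24} is to certify that $\omega$ is ``long enough'' radially away from $a_{\tau}$, i.e. that the available radial budget $\rho_1^2-\rho_0^2$ inside $\omega$ can be made to exceed $4a_3\ell$. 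The remaining bookkeeping, namely that the suprema and infima above are generally not attained (open endpoints) and that the relevant cylinder pieces carry positive Lebesgue measure, is routine once the two radii are fixed.
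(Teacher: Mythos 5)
Your proof is correct and follows essentially the same route as the paper's: both exploit that $\eta_0$ is a strictly decreasing function of $\vartheta(x)=|x-a|^2$ and take $\omega_1\subsetneq\omega_0$ to be the portions of $\omega$ lying outside two concentric circles centred at $a_{\tau}$, with \eqref{c23}--\eqref{c24} supplying nonemptiness of $\omega_1$ and the radial gap that beats the longitudinal ($x_3$) contribution. The only differences are cosmetic: the paper fixes the radii $d_{\tau}+\epsilon$ and $d_{\tau}+2\epsilon$ with $\epsilon=2L(L+d_3)/d_{\tau}$ and works with the cruder longitudinal gap $4L(L+d_3)$, whereas you push the radii toward the endpoints $d_{\tau}$ and $d_{\tau}+4L(L+d_3)/d_{\tau}$ and use the sharp gap $4a_3\ell$, while also spelling out the properness/strict-inclusion details that the paper leaves implicit.
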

\begin{proof}
For $\epsilon=2L(L+d_3) \slash d_{\tau}$, where $d_{\tau}$ is the same as in \eqref{c23}, we put
$$\omega_j = \{ x_{\tau} \in \omega,\ | x_{\tau} - a_{\tau} | > d_{\tau} +(j+1) \epsilon \},\ j=0,1, $$
in such a way that 
$\omega_1  \subsetneq \omega_0 \subsetneq \omega$ according to \eqref{c23}-\eqref{c24}, and
\bel{b18}
\inf_{x_{\tau} \in \omega_1} | x_{\tau}- a_{\tau} | \geq \sup_{x_{\tau} \in \omega \setminus \omega_0} | x_{\tau}- a_{\tau} | + \epsilon.
\ee
Further, as $|x_{\tau}-a_{\tau}|^2 + d_3^2 \leq \vartheta(x)  \leq |x_{\tau}-a_{\tau}|^2 + (2L+d_3)^2$ for all $x=(x_{\tau},x_3) \in \Omega(\ell)$,
by \eqref{c20}-\eqref{c21}, it follows from \eqref{b18} that 
\bel{b19}
\vartheta(x) \geq \vartheta(x') + R(x_{\tau}'),\ x \in \Omega_1(\ell),\ x'=(x_{\tau}',x_3') \in \Omega(\ell) \setminus \Omega_0(\ell),
\ee
where $R(x_{\tau}')= 2 \epsilon (|x_{\tau}' - a_{\tau}| - d_{\tau}) + \epsilon^2$. Since $|x_{\tau}'-a_{\tau}|\geq d_{\tau}$ from \eqref{c23}, then $R(x_{\tau}') \geq \epsilon^2$ and 
$$ \vartheta(x) \geq \vartheta(x') + \epsilon^2,\ x \in \Omega_1(\ell),\ x' \in \Omega(\ell) \setminus \Omega_0(\ell). $$
This entails $\inf_{x \in \Omega_1(\ell)} \vartheta(x) > \sup_{x \in \Omega(\ell) \setminus \Omega_0(\ell)} \vartheta(x)$, hence the result by \eqref{c4}.
\end{proof}
Since $\| {\rm e}^{-s \eta_0}  \pf \tilde{q}_0\ \dot{\alpha} \|_{{\rm L}^2(\Omega(\ell))} \geq \mathfrak{q}\ \| {\rm e}^{-s  \eta_0} \dot{\alpha} \|_{{\rm L}^2(\Omega_0(\ell))}$ by \eqref{a2}, we deduce from \eqref{b16}  that
$$ (\mathfrak{q} - C_4' s^{-3/2} ) \|  {\rm e}^{-s \eta_0} \dot{\alpha} \|_{{\rm L}^2(\Omega_0(\ell))}^2 \\
\leq C_4' ( {\rm obs} + s^{-3/2} \| {\rm e}^{-s \eta_0} \dot{\alpha} \|_{{\rm L}^2(\Omega \setminus \Omega_0(\ell))}^2 ).
$$
Therefore we have
\bel{b20}
\|  {\rm e}^{-s \eta_0} \dot{\alpha} \|_{{\rm L}^2(\Omega_1(\ell))}^2 
\leq C_5' ( {\rm obs} + s^{-3/2} \| {\rm e}^{-s \eta_0} \dot{\alpha} \|_{{\rm L}^2(\Omega(\ell) \setminus \Omega_0(\ell))}^2 ), 
\ee
for some $C_5'=C_5'()>0$ and all $s$ large enough. Here $\Omega_1(\ell)$ is the same as in Lemma \ref{lm3} so \eqref{b20} yields
\bel{b21}
{\rm e}^{-s m_1} ( {\rm mes}(\omega_1) - s^{-3/2} {\rm e}^{-s (m_0-m_1)} {\rm mes}(\omega) ) \| \dot{\alpha} \|_{{\rm L}^2(I_{\ell})}^2 \leq C_5'\ {\rm obs},
\ee
under the above prescribed conditions on $s$. Finally, by recalling \eqref{b16b} and noticing that
$$ {\rm obs} \leq C_6' ( \| \partial_{\nu} z \|_{{\rm L}^2(\Sigma_0(r))}^2 + \| z \|_{{\rm L}^2(-T,T;{\rm H}^1(\Omega(\ell,r)))} + \| y(0,.) \|_{{\rm H}^2(\Omega(r))}^2 ), $$
where $C_6'=C_6'()>0$, the desired result follows directly from this, \eqref{b21} and Lemma \ref{lm3} upon taking $s$ sufficiently large.

\section{Appendix : proof of Proposition \ref{pr-C}}

\subsection{Preliminaries}
Put 
$$\divg X := \frac{1}{\sqrt{{\rm det} g}} \sum_{j=1}^n \partial_j(\sqrt{{\rm det} g} \alpha_j),\ X=(\alpha_j)_{1 \leq j \leq n} \in \C^n,$$
in such a way that \eqref{c0b} reads $\Delta_g = \divg \nabla_g$. For $X$ and $f$ in ${\rm H}^1(\MM)$ we have
\bel{gdiv}
\divg( f X ) = \langle \nag f , X \rg + f \divg X.
\ee
For $X$ and $f$ in ${\rm H}^1(\MM)$ we have the following Green formula
\bel{gGreen}
\int_{\MM} \divg(X) f\ \dvg = -\int_{\MM} \langle X , \nag f \rg\ \dvg +  \int_{\pd \MM} X \cdot \nu f \dwg,
\ee
which yields
\bel{gStokes}
\int_{\MM} (\lag w) f\ \dvg = -\int_{\MM} \langle \nag w , \nag f \rg\ \dvg +  \int_{\pd \MM} (\png w) f \dwg,
\ee
whenever $f \in {\rm H}^1(\MM)$ and $w \in {\rm H}^2(\MM)$.

\subsection{Completion of the proof}

In light of \eqref{c5}-\eqref{c6} we shall first compute each term in the rhs of the following identity
\bel{ca0}
\Pre{\int_{\MM_T} M_1 w \overline{M_2 w}\ \dvg\ \dd t} = \sum_{j,k=1,2,3} I_{j,k},
\ee
separately.
We start by noticing that
\beas 
I_{1,1} & := & \Pre{\int_{\MM_T} \imath w' ( - \imath s \eta' \overline{w})\ \dvg\ \dd t}
= s \Pre{\int_{\MM_T} w' \eta' \overline{w}\  \dvg\ \dd t} \\
& = & - s \Pre{\int_{\MM_T} w ( \eta'' \overline{w} + \eta' \overline{w'})\ \dvg\  \dd t} =
-s \Pre{\int_{\MM_T} \eta'' | w |^2\ \dvg\ \dd t} - I_{1,1},
\eeas
where we used the fact that $w(\pm T)=0$. Hence
\bel{ca1}
I_{1,1} = - \frac{s}{2} \int_{\MM_T} \eta'' | w |^2 \dvg\ \dd t.
\ee
Next, bearing in mind that $w(\pm T)=0$, we have
\bea
I_{1,2} & := & 2 s \Pre{\int_{\MM_T} \imath \langle \nag \eta , \nag \overline{w} \rg w'\ \dvg\ \dd t} 
= - 2s \Pim{\int_{\MM_T} \langle \nag \eta , \nag \overline{w} \rg w'\ \dvg\ \dd t} \nonumber \\
& = & 2 s \Pim{\int_{\MM_T} \left( \langle \nag \eta' , \nag \overline{w} \rg + \langle \nag \eta , \nag \overline{w'} \rg \right) w\ \dvg\ \dd t}, \label{ca2}
\eea
with, in addition,
$$ \int_{\MM} \langle \nag \eta , \nag \overline{w'} \rg w\ \dvg = -\int_{\MM} \divg((\nag \eta) w) \overline{w'} \dvg = - \int_{\MM} ( (\lag \eta) w + \overline{\langle \nag \eta , \nag \overline{w} \rg ) w'} \dvg, $$
by taking into account that $w'$ vanishes on $\pd \MM$. Therefore, it follows from this and \eqref{ca2} that
$$
I_{1,2} = 2 s \Pim{\int_{\MM_T} \left( \langle \nag \eta' , \nag \overline{w} \rg - (\lag \eta) \overline{w'} \right) w\ \dvg\ \dd t} - I_{1,2}, $$
from where we get that
\bel{ca3}
I_{1,2} = s \gamma \Pim{\int_{\MM_T} \left( \psi (\lag \vartheta + \gamma | \nag \vartheta |_g^2) \overline{w'} - \psi' \langle \nag \vartheta , \nag \overline{w} \rg \right) w\ \dvg\ \dd t}.
\ee
Further we have
\bel{ca4}
I_{1,3} := s \Pre{\int_{\MM_T} \imath w' (\lag \eta) \overline{w}\ \dvg\ \dd t} = -s \gamma \Pim{\int_{\MM_T} \psi ( \lag \vartheta + \gamma | \nag \vartheta |_g^2) w \overline{w'}\ \dvg\ \dd t},
\ee
and $I_{2,1}:=\Pre{\int_{\MM_T} (\lag w) (-\imath s \eta' \overline{w})\ \dvg\ \dd t} = s \Pim{\int_{\MM_T} (\lag w) \eta' \overline{w}\ \dvg\ \dd t}$, where
$$
\int_{\MM} (\lag w) \eta' \overline{w}\ \dvg = - \int_{\MM} \overline{\langle \nag \eta' , \nag \overline{w} \rg w}\ \dvg
- \int_{\MM} \eta' \langle \nag w , \nag \overline{w} \rg\ \dvg, $$
since $w=0$ on $\pd \MM$, so we find that
\bel{ca5}
I_{2,1} = -s \gamma \Pim{\int_{\MM_T} \psi' \langle \nag  \vartheta , \nag \overline{w} \rg w\ \dvg\ \dd t}.
\ee
Now, putting \eqref{ca3}, \eqref{ca4} and \eqref{ca5} together we obtain that
\bel{ca5b}
I_{1,2}+I_{1,3}+I_{2,1}=-2s \gamma \Pim{\int_{\MM_T} \psi' \langle \nag \vartheta , \nag \overline{w} \rg w\ \dvg\ \dd t}.
\ee
We turn now to computing $I_{2,2}:=2 s \Pre{\int_{\MM_T} (\lag w) \langle \nag \eta , \nag \overline{w} \rg\ \dvg\ \dd t}$.
In light of \eqref{gStokes} we see that
\bel{ca6}
\int_{\MM} (\lag w) \langle \nag \eta , \nag \overline{w} \rg\ \dvg = - \int_{\MM} \langle \nag w , \nag \langle \nag \eta , \nag \overline{w} \rg \rg\ \dvg + \int_{\pd \MM} \langle \nag \eta , \nag \overline{w} \rg
\png w\ \dwg.
\ee
Bearing in mind that $w=0$ and hence $\nag w \cdot \tau=0$ on $\pd \MM$, where $\tau$ denotes the unit tangential vector to $\pd \MM$, we have
$\langle \nag \eta , \nag \overline{w} \rg =  (\nabla \eta \cdot \nu) \png \overline{\omega}$,
hence
\bel{ca7} 
\int_{\pd \MM} \langle \nag \eta , \nag \overline{w} \rg \png w\ \dwg
= \int_{\pd \MM} (\nabla \eta \cdot \nu) | \png w |^2\ \dwg
=- \gamma \int_{\pd \MM} \psi (\nabla \vartheta \cdot \nu) | \png w |^2\ \dwg.
\ee
Moreover it holds true that
\bea
& & \langle \nag w , \nag \langle \nag \eta , \nag \overline{w} \rg \rg = 
-\gamma \langle \nag w , \nag ( \psi \langle \nag \vartheta , \nag \overline{w} \rg ) \rg \nonumber \\
& = & - \gamma \psi \langle \nag w , \nag \langle \nag \vartheta , \nag \overline{w} \rg \rg 
- \gamma^2 \psi | \langle \nag \vartheta , \nag w \rg |^2. \label{ca8}
\eea
Further, as $\langle \nag w , \nag \langle \nag \vartheta , \nag \overline{w} \rg \rg  
= \nag w(\langle \nag \vartheta , \nag \overline{w} \rg)$, we have
\beas
& & \langle \nag w , \nag \langle \nag \vartheta , \nag \overline{w} \rg \rg 
 = \langle \mathbb{D}_{\nag w} \nag \vartheta , \nag \overline{w} \rg + \langle \nag \vartheta , \mathbb{D}_{\nag w} \nag \overline{w} \rg \\
& = & \mathbb{D}^2 \vartheta (\nag w,\nag \overline{w}) + \mathbb{D}^2 \overline{w} (\nag w,\nag \vartheta),
\eeas
so \eqref{ca6}--\eqref{ca8} yields
\bea
I_{2,2} & = & 2 s \gamma \int_{\MM_T} \psi \mathbb{D}^2 \vartheta (\nag w,\nag \overline{w})\ \dvg\ \dd t
+  2 s \gamma \Pre{\int_{\MM_T} \psi \mathbb{D}^2 \overline{w} (\nag w,\nag \vartheta)\ \dvg\ \dd t} \nonumber \\
& & + 2s \gamma^2 \int_{\MM_T} \psi | \langle \nag \vartheta , \nag w \rg |^2\ \dvg\ \dd t
-2s \gamma \int_{\pd \MM_T} \psi (\nabla \vartheta \cdot \nu) | \png w |^2\ \dwg\ \dd t. \label{ca9}
\eea
The next step involves noticing that 
$\langle \nag \eta, \nag \langle \nag w , \nag \overline{w} \rg \rg
= \nag \eta (\langle \nag w , \nag \overline{w} \rg)$ is equal to
$\langle \mathbb{D}_{\nag \eta} \nag w , \nag \overline{w} \rg + \langle \nag w , \mathbb{D}_{\nag \eta} \nag \overline{w} \rg$, in such a way that we have
\beas
& & \langle \nag \eta, \nag \langle \nag w , \nag \overline{w} \rg \rg
= \mathbb{D}^2 w(\nag \eta , \nag \overline{w}) + \mathbb{D}^2 \overline{w}(\nag \eta , \nag w) \\
& = & - \gamma \psi \left( \mathbb{D}^2 w(\nag \vartheta , \nag \overline{w}) + \mathbb{D}^2 \overline{w}(\nag \vartheta , \nag w) \right)
 = - 2\gamma \psi \Pre{\mathbb{D}^2 \overline{w}(\nag \vartheta , \nag w)},
\eeas
and consequently
\beas
& & 2 \gamma \Pre{\int_{\MM} \psi \mathbb{D}^2 \overline{w}(\nag w , \nag \vartheta)\ \dvg}= 
-\int_{\MM} \langle \nag \eta, \nag \langle \nag w , \nag \overline{w} \rg \rg\ \dvg \\
& = & -\int_{\MM} (\lag \eta) \langle \nag w , \nag \overline{w} \rg\ \dvg + \int_{\pd \MM} (\nabla \eta \cdot \nu) | \png \omega |^2\ \dwg \\
& = & -\gamma \int_{\MM} \psi (\lag \vartheta +\gamma | \nag \vartheta |_g^2 ) \langle \nag w , \nag \overline{w} \rg\ \dvg + \gamma \int_{\pd \MM} \psi (\nabla \vartheta \cdot \nu) | \png \omega |^2\ \dwg,
\eeas
by recalling that $\nag w \cdot \tau = 0$ on $\pd \MM$.
From this and \eqref{ca9} then follows that
\bea
I_{2,2} & = & 2 s \gamma \int_{\MM_T} \psi \mathbb{D}^2 \vartheta (\nag w,\nag \overline{w})\ \dvg\ \dd t
- s \gamma \int_{\MM_T} \psi ( \lag \vartheta + \gamma  | \nag \vartheta |_g^2 ) \langle \nag w , \nag \overline{w} \rg\
\dvg\ \dd t \nonumber \\
& & + 2s \gamma^2 \int_{\MM_T} \psi | \langle \nag \vartheta , \nag w \rg |^2\ \dvg\ \dd t
-s \gamma \int_{\pd \MM_T} \psi (\nabla \vartheta \cdot \nu) | \png w |^2\ \dwg\ \dd t. \label{ca10}
\eea
Let us now consider
\bea
& & I_{2,3}  :=  s \Pre{\int_{\MM_T} (\lag w) (\lag \eta) \overline{w}\ \dvg\ \dd t}
= -s \Pre{\int_{\MM_T} \langle \nag w , \nag ( (\lag \eta) \overline{w}) \rg\ \dvg\ \dd t} \nonumber \\
& = & -s  \Pre{\int_{\MM_T} \langle \nag w , \nag (\lag \eta) \rg \overline{w}\ \dvg\ \dd t}
-s \int_{\MM_T}  (\lag \eta) \langle \nag w , \nag \overline{w} \rg\ \dvg\ \dd t \nonumber \\
& = & s \gamma \int_{\MM_T} \psi ( \lag \vartheta + \gamma | \nag \vartheta |_g^2 ) \langle \nag w , \nag \overline{w} \rg\ \dvg\ \dd t  \nonumber \\
& + & s \gamma \Pre{\int_{\MM_T} \psi \langle \nag( \lag \vartheta + \gamma | \nag \vartheta |_g^2 ) , \nag \overline{w} \rg 
w\ \dvg \dd t}  \nonumber \\
& + & s \gamma^2 \Pre{ \int_{\MM_T} \psi ( \lag \vartheta + \gamma | \nag \vartheta |_g^2 ) \langle  \nag \vartheta , \nag \overline{w} \rg w\ \dvg \dd t}. \label{ca11}
\eea
The strategy now is to re-express each of the two last terms in the rhs of \eqref{ca11}. 
To do that we substitute $(1 \slash 2) \langle \nag \vartheta , \nag | w |^2 \rg$ for
$\Pre{\langle \nag \vartheta , \nag \overline{w} \rg w}$ in the last integral and apply \eqref{gGreen}, getting
\beas
& & \Pre{\int_{\MM_T} \psi ( \lag \vartheta + \gamma | \nag \vartheta |_g^2 ) \langle \nag \vartheta , \nag \overline{w} \rg w\ \dvg\ \dd t} \\
& = & \frac{1}{2} \int_{\MM_T} \langle \psi ( \lag \vartheta + \gamma | \nag \vartheta |_g^2 )
\nag \vartheta , \nag | w |^2 \rg\ \dvg\ \dd t \\
& = & - \frac{1}{2} \int_{\MM_T} \divg( \psi  ( \lag \vartheta + \gamma | \nag \vartheta |_g^2 ) \nag \vartheta ) | w |^2\ \dvg\ \dd t,
\eeas
since $w=0$ on $\pd \MM$. In light of \eqref{gdiv} this entails
\bea
& & \Pre{\gamma \int_{\MM} \psi ( \lag \vartheta + \gamma | \nag \vartheta |_g^2 ) \langle \nag \vartheta , \nag \overline{w} \rg w\ \dvg} \nonumber \\
& = & - \frac{1}{2} \int_{\MM} \psi \left( ( \lag \vartheta + \gamma | \nag \vartheta |_g^2 )^2 + \langle \nag  ( \lag \vartheta + \gamma | \nag \vartheta |_g^2 ) , \nag \vartheta \rg |w|^2 \right)\ \dvg. \label{ca12}
\eea
Similarly, by bringing the penultimate integral in the rhs of \eqref{ca11} into the form
\beas 
& & \Pre{\int_{\MM_T} \psi \langle  \nag( \lag \vartheta + \gamma | \nag \vartheta |_g^2 ) , \nag \overline{w}\rg w \dvg\ \dd t} \\
& = & \frac{1}{2} \int_{\MM_T} \psi \langle \nag ( \lag \vartheta + \gamma | \nag \vartheta |_g^2 ) , \nag | w |^2 \rg\ \dvg \dd t,
\eeas
and then applying \eqref{gGreen}, we find out that
\beas 
& &\Pre{\int_{\MM_T} \psi \langle  \nag( \lag \vartheta + \gamma | \nag \vartheta |_g^2 ) , \nag  \overline{w} \rg w\ \dvg \dd t} \\
& =&  -\frac{1}{2} \int_{\MM_T} \divg(\psi \nag  ( \lag \vartheta + \gamma | \nag \vartheta |_g^2 ) ) | w |^2\ \dvg\ \dd t \\
& = & -\frac{1}{2} \int_{\MM_T} \psi \left(\lag ( \lag \vartheta + \gamma | \nag \vartheta |_g^2 ) + \gamma \langle
\nag \vartheta , \nag ( \lag \vartheta + \gamma | \nag \vartheta |_g^2 ) \rg \right) | w |^2\ \dvg\ \dd t.
\eeas
Along with \eqref{ca11}-\eqref{ca12}, this yields
\bea
I_{2,3} & = & s \gamma  \int_{\MM_T} \psi ( \lag \vartheta + \gamma | \nag \vartheta |_g^2 ) \langle \nag w , \nag \overline{w} \rg\ \dvg\ \dd t \nonumber \\
& - &  \frac{s \gamma}{2} \int_{\MM_T} \psi \left[ \lag( \lag \vartheta + \gamma | \nag \vartheta |_g^2 ) + 2 \gamma \langle \nag \vartheta , \nag ( \lag \vartheta + \gamma | \nag \vartheta |_g^2 ) \rg \right. \nonumber \\
& & \hspace*{1.7cm} \left. + \gamma ( \lag \vartheta + \gamma | \nag \vartheta |_g^2 )^2 \right] | w |^2\ \dvg\ \dd t. \label{ca13}
\eea
Further, the case of 
\bel{ca14}
I_{3,1} := s^3 \Pre{\int_{\MM_T} | \nag \eta |_g^2 w (-\imath \eta' \overline{w})\ \dvg\ \dd t} = \Pim{\int_{\MM_T} | \nag \eta |_g^2 \eta' | w |^2 \ \dvg\ \dd t}=0,
\ee 
being easily treated, we turn our attention to the computation of
\beas
I_{3,2} & := & 2 s^3 \Pre{\int_{\MM_T} | \nag \eta |_g^2 \langle \nag \eta , \nag \overline{w} \rg w\ \dvg\ \dd t}
= \int_{\MM_T}  | \nag \eta |_g^2 \langle  \nag \eta , \nag |w|^2 \rg\ \dvg\ \dd t \\
& = & -\int_{\MM_T} \divg( | \nag \eta |_g^2 \nag \eta ) | w |^2\ \dvg\ \dd t.
\eeas
From this and the two identities $\divg( | \nag \eta |_g^2 \nag \eta )=| \nag \eta |_g^2 \lag \eta + \langle \nag \eta , \nag | \nag \eta |_g^2 \rg$ and $\langle \nag \eta , \nag | \nag \eta |_g^2 \rg = - \gamma^3 \psi^3( 2 \gamma | \nag \vartheta |_g^2 + \langle \nag \vartheta , \nag | \nag \vartheta |_g^2 \rg)$ then follows that
\bel{ca15}
I_{3,2} = s^3 \gamma^3 \int_{\MM_T} \psi^3 (3 \gamma | \nag \vartheta |_g^4 + \langle \nag \vartheta , \nag | \nag \vartheta |_g^2 \rg + | \nag \vartheta |_g^2 \lag \vartheta )\ \dvg\ \dd t.
\ee
Finally, we get
\bel{ca16} 
I_{3,3} := s^2 \int_{\MM_T} | \nag \eta |_g^2 (\lag \eta) | w |^2\ \dvg\ \dd t = - s^3 \gamma^3 \int_{\MM_T} \psi^3 ( \gamma | \nabla \vartheta |_g^4 + | \nabla \vartheta |_g^2 \lag \vartheta) |w|^2\ \dvg\ \dd t,
\ee
by straighforward computations. Now, putting \eqref{ca0}--\eqref{ca1}, \eqref{ca5b}, \eqref{ca10} and \eqref{ca13}--\eqref{ca16} together, we have obtained that
\bea
& & \Pre{\int_{\MM_T} M_1 w \overline{M_2 w}\ \dvg\ \dd t} \\
& = & 2 s \gamma^2 \int_{\MM_T} \psi | \langle \nag \vartheta , \nag w \rg |^2\ \dvg\ \dd t 
+ 2 s \gamma \int_{\MM_T} \psi \mathbb{D}^2 \vartheta(\nag w , \nag \overline{w})\ \dvg\ \dd t \nonumber \\
& + & 2 s^3 \gamma^4 \int_{\MM_T} \psi^3 | \nag \vartheta |_g^4 | w |^2\ \dvg\ \dd t
- s \gamma \int_{\pd \MM_T} (\nabla \vartheta \cdot \nu) | \png w |^2\ \dwg\ \dd t - \mathcal{R}, \label{ca17}
\eea
where the remaining term $\mathcal{R}$ collects the integrals which are negligible wrt to either 
$s \gamma^2 \int_{\MM_T} \psi | \langle \nag \vartheta , \nag w \rg |^2\ \dvg\ \dd t$ or $s^3 \gamma^4 \int_{\MM_T} \psi^3 | \nag \vartheta |_g^4 | w |^2\ \dvg\ \dd t$:
\bea
\mathcal{R} & := & \frac{s}{2} \int_{\MM_T} \eta'' |w|^2\ \dvg\ \dd t + 2 s \gamma \Pim{\int_{\MM_T} \psi' \langle  \nag \vartheta , \nag \overline{w} \rg w\ \dvg\ \dd t} \nonumber \\
& + &  \frac{s \gamma}{2} \int_{\MM_T} \psi (\lag^2 \vartheta) | w |^2\ \dvg\ \dd t \nonumber \\
& + &  \frac{s \gamma^2}{2} \int_{\MM_T} \psi \left[ (\lag \vartheta)^2 + 2 \langle \nag \vartheta , \nag(\lag \vartheta) \rg + \lag | \nag \vartheta |_g^2 \right] |w|^2\ \dvg\ \dd t \nonumber \\
& + &  s \gamma^3 \int_{\MM_T} \psi \left[ (\lag \vartheta) | \nag \vartheta |_g^2  + \langle \nag \vartheta , \nag(\lag \vartheta) \rg  \right] |w|^2\ \dvg\ \dd t \nonumber \\
& + &  \frac{s \gamma^4}{2} \int_{\MM_T} \psi  | \nag \vartheta |_g^4 |w|^2\ \dvg\ \dd t \nonumber\\
& - &  s^3 \gamma^3 \int_{\MM_T} \psi^3 \langle \nag \vartheta , \nag | \nag \vartheta |_g^2 \rg  |w|^2\ \dvg\ \dd t.
\label{ca18}
\eea
Since 
$$\left| \int_{\MM_T} \psi' \langle \nag \vartheta , \nag \overline{w} \rg w\ \dvg\ \dd t \right| \leq \int_{\MM_T} | \psi'|^{1 \slash 2}
| \langle \nag \vartheta , \nag w \rg |^2\ \dvg\ \dd t + \int_{\MM_T} | \psi' |^{3 \slash 2} | w |^2\ \dvg\ \dd t,$$ 
by Young inequality and 
$$ 0 \leq  \psi(t,x)  \leq C \psi(t,x)^3,\ | \psi'(t,x) | \leq C \psi(t,x)^2,\ | \eta''(t,x) | \leq C \psi(t,x)^3,\ (t,x) \in \MM_T, $$
for some constant $C=C(T)>0$, depending only on $T$, by \eqref{c3}, 
we deduce from \eqref{ca18} that
\bel{ca19}
| \mathcal{R} | \leq C \left( s \gamma \int_{\MM_T} \psi | \langle \nag \vartheta , \nag w \rg |_g^2\ \dvg\ \dd t + s^3 \gamma^3 \int_{\MM_T} \psi^3 | w |^2\ \dvg\ \dd t + s \gamma^4 \int_{\MM_T} \psi |w|^2\ \dvg\ \dd t \right),
\ee
whenever $s \geq 1$ and $\gamma \geq 1$. Here $C>0$ denotes some positive constant which depends only on $T$. 

The last step of the proof involves putting the following basic identity
$$\int_{\MM_T} | M w |^2\ \dvg\ \dd t = \int_{\MM_T} (| M_1 w |^2 + | M_2 w|^2)\ \dvg\ \dd t + 2 \Pre{\int_{\MM_T} M_1 w \overline{M_2 w}\ \dvg\ \dd t},$$
and \eqref{ca17} together, getting 
\beas
& & 4 s \gamma^2 \int_{\MM_T} \psi | \langle \nag \vartheta , \nag w \rg |^2\ \dvg\ \dd t +
\int_{\MM_T} ( |M_1 w |^2 + |M_1 w |^2)\ \dvg\ \dd t \\
& + &  4 s \gamma \int_{\MM_T} \psi \mathbb{D}^2 \vartheta(\nag w , \nag \overline{w})\ \dvg\ \dd t + 4 s^3 \gamma^4  \int_{\MM_T} \psi^3 | \nag \vartheta |_g^4| w |^2\ \dvg\ \dd t \\
& \leq & \int_{\MM_T} | M w |^2\ \dvg\ \dd t + 2 s \gamma \int_{\pd \MM_T} \psi (\nabla \vartheta \cdot \nu) | \png w |^2\ \dwg\ \dd t + 2 | \mathcal{R} |,
\eeas
and then using \eqref{caH2b} and \eqref{ca19}. We obtain that
\beas
& & s \gamma^2 \int_{\MM_T} \psi | \langle \nag \vartheta , \nag w \rg |^2\ \dvg\ \dd t +
\int_{\MM_T} ( |M_1 w |^2 + |M_1 w |^2)\ \dvg\ \dd t \\
& + &  s \gamma \int_{\MM_T} \psi \mathbb{D}^2 \vartheta(\nag w , \nag \overline{w})\ \dvg\ \dd t + s^3 \gamma^4  \int_{\MM_T} \psi^3 | w |^2\ \dvg\ \dd t \\
& \leq & C \left( \int_{\MM_T} | M w |^2\ \dvg\ \dd t + 2 s \gamma \int_{\pd \MM_T} \psi (\nabla \vartheta \cdot \nu) | \png w |^2\ \dwg\ \dd t \right. \\
& + & \left. s \gamma  \int_{\MM_T} | \langle \nag \vartheta , \nag w \rg |_g^2\ \dvg\ \dd t + s^3 \gamma^3 \int_{\MM_T} \psi^3 | w |^2\ \dvg\ \dd t + s \gamma^4 \int_{\MM_T} \psi |w|^2\ \dvg\ \dd t \right),
\eeas
for every $s \geq 1$ and $\gamma \geq 1$, the constant $C=C(T,\beta)$ depending only on $T$ and $\beta$. This easily entails
\beas
& & s \gamma^2 \int_{\MM_T} \psi | \langle \nag \vartheta , \nag w \rg |^2\ \dvg\ \dd t +
\int_{\MM_T} ( |M_1 w |^2 + |M_1 w |^2)\ \dvg\ \dd t \\
& + & s \gamma \int_{\MM_T} \psi \mathbb{D}^2 \vartheta(\nag w , \nag \overline{w})\ \dvg\ \dd t + s^3 \gamma^4 \int_{\MM_T} \psi^3 | w |^2\ \dvg\ \dd t \\
& \leq & C \left( \int_{\MM_T} | M w |^2\ \dvg\ \dd t + s \gamma \int_{\pd \MM_T} \psi (\nabla \vartheta \cdot \nu) | \png w |^2\ \dwg\ \dd t \right),
\eeas
upon taking $s$ and $\gamma$ sufficiently large, so the desired result follows immediately from this, \eqref{caH1b} and \eqref{caH3}.


\begin{thebibliography}{99}
\bibitem{BEL1}{\sc M. Bellassoued}, {\it Uniqueness and stability
in determining the speed of propagation of second-order
hyperbolic equation with variable coefficients},
Appl. Anal. {\bf 83}, 983--1014 (2004).
%
%
\bibitem{BP}{\sc L. Baudouin, J.-P. Puel}, {\it }
, 983-1014 (2002).
%
%
%

\end{thebibliography}
\end{document}